\documentclass[twoside,reqno]{amsart}

\usepackage{amsfonts,amsmath,amscd,amsthm,amssymb}
\usepackage{mathtools}
\usepackage[mathscr]{euscript} 
\usepackage{graphics}
\usepackage{wrapfig}
\usepackage{lscape}
\usepackage{rotating}
\usepackage{epsfig}
\usepackage{cite}
\usepackage{color}
\usepackage{booktabs}
\usepackage[vlined,ruled]{algorithm2e}
\usepackage{multirow}
\usepackage{lipsum,multicol}

\usepackage{tikz}
\usetikzlibrary{patterns}


\newtheorem{lemma}{Lemma}
\newtheorem{theorem}{Theorem}
\newtheorem{corollary}{Corollary}
\newtheorem{remark}{Remark}

\def\restrict#1{\raise-0.2ex\hbox{\ensuremath|}_{#1}}

\newcommand{\bld}[1]{\boldsymbol{#1}}


\newcommand{\compPhi}{\underline{{\phi}}}
\newcommand{\compPsi}{\underline{\psi}}

\newcommand{\compXi}{\underline{{\bld \xi}}}
\newcommand{\compEta}{\underline{\bld \eta}}


\newcommand{\compP}{\underline{{p}}}

\newcommand{\compPspace}{\underline{W}(h)}
\newcommand{\compPhspace}{\underline{W\!}_h}
\newcommand{\compPh}{\underline{{p}}_h}




\newcommand{\compU}{\underline{\boldsymbol{u}}}
\newcommand{\compUh}{\underline{\boldsymbol{u}}_h}
\newcommand{\compUP}{\underline{\Pi \boldsymbol{u}}}
\newcommand{\compPP}{\underline{\Pi {p}}}

\newcommand{\compUhspace}{\underline{\boldsymbol{U}\!}_h}
\newcommand{\compUspace}{\underline{\boldsymbol{U}}(h)}


\newcommand{\compV}{\underline{\boldsymbol{v}}}
\newcommand{\compWW}{\underline{\boldsymbol{w}}}
\newcommand{\compW}{\underline{{w}}}

\newcommand{\compVh}{\underline{\boldsymbol{v}}_h}
\newcommand{\eu}{{\bld \varepsilon}_{u}}
\newcommand{\du}{{\bld\delta}_{u}}
\newcommand{\compep}{\underline{\varepsilon}_{p}}

\newcommand{\dpp}{{ \delta}_{p}}
\newcommand{\ep}{{ \varepsilon}_{p}}
\newcommand{\compeu}{\underline{\bld \varepsilon}_{u}}

\newcommand{\Vh}{\bld{V}_{\!h}}
\newcommand{\Wh}{{W}_{\!h}}
\newcommand{\wf}{\widehat{w}}
\newcommand{\wwf}{\widehat{\bld w}}

\newcommand{\vf}{\widehat{\bld v}}

\newcommand{\pf}{\widehat{p}}

\newcommand{\uf}{\widehat{\bld u}}

\newcommand{\psif}{\widehat{\psi}}
\newcommand{\phif}{\widehat{\phi}}
\newcommand{\xif}{\widehat{\bld \xi}}
\newcommand{\etaf}{\widehat{\bld \eta}}

\newcommand{\WFh}{\widehat{W}_{\!h}}
\newcommand{\VFh}{\widehat{\bld V}_{\!h}}

\newcommand{\PiWF}{\Pi_{\widehat{W}}}
\newcommand{\PiVF}{\Pi_{\widehat{V}}}

\newcommand{\divs}{{\mathrm{div}}}
\newcommand{\grads}{{\nabla}}

\newcommand{\pol}{\mathbb{P}}

\newcommand{\bR}{\mathbb{R}}
\newcommand{\Oh}{\mathcal{T}_h}

\newcommand{\Eh}{\mathcal{F}_h}

\newcommand{\jmp}[1]{[\![#1 ]\!]}
\newcommand{\vertiii}[1]{{\left\vert\kern-0.25ex\left\vert\kern-0.25ex\left\vert #1
    \right\vert\kern-0.25ex\right\vert\kern-0.25ex\right\vert}}

\newcommand{\gradss}{\nabla_s}
\begin{document}
\title[Divergence-conforming HDG for The Biot Problem]{
A high-order HDG method for the Biot's consolidation model}
\author{Guosheng Fu}
\address{Division of Applied Mathematics, Brown University, 182 George St,
Providence RI 02912, USA.}
\email{Guosheng\_Fu@brown.edu}


\keywords{HDG, divergece-conforming, fully discrete, poroelasticity}
\subjclass{65N30, 65N12, 76S05, 76D07}

\begin{abstract}
We propose a novel high-order HDG method for the Biot's consolidation model in poroelasticity.
We present optimal error analysis for both the semi-discrete and full-discrete (combined with temporal backward differentiation formula) schemes.
Numerical tests are provided to demonstrate the performance of the method.
\end{abstract}
\maketitle

\section{Introduction}
\label{sec:intro}
Biot's seminar work \cite{Biot41,Biot56,Biot72} laid the foundation of the theory of poroelasticity, which models the 
the interaction between the fluid flow and deformation in an fluid-saturated porous medium.
The model is used in several industries such as petroleum and environmental engineering \cite{Zheng03,Strehlow15} 
and medical applications such as the modeling of the intestinal oedema \cite{Young14}.

In this paper, we consider the numerical solution of the 
following quasi-static Biot's consolidation model 
\begin{subequations}
\label{eqns}
\begin{alignat}{3} 
\label{eq1}
c_s \dot{p}+\alpha\, \divs(\dot{\bld u}) - \divs(\kappa\grads p) &= f \qquad && \text{in $\Omega$,}\\
\label{eq2}
- \mathrm{div}\left(2\mu\gradss(\bld u) - \lambda\,\mathrm{div}(\bld u)\bld I\right)  
+\alpha\grads p&= 
\bld g \qquad && \text{in $\Omega$,}
\end{alignat}
with homogeneous Dirichlet boundary conditions and proper initial data:
\begin{alignat}{3}
\label{eq3}
\bld u &= \bld 0, && \quad\quad\quad p \;= 0\quad\quad\quad&&\text{on $\partial \Omega$,}\\
\label{eq4}
\bld u(0,\bld x) &= \bld u_0(\bld x) , &&\quad\quad\quad p(0,\bld x) \;= p_0(\bld x)\quad\quad\quad
&& \text{in $ \Omega$,}
\end{alignat}
\end{subequations}
where $\Omega\subset \bR^d$, $d=2,3$, is a bounded polygonal/polyhedral domain,
$p$ is the pressure and $\bld u$ is the deformation,  
$c_s\ge 0$ is the constrained specific storage coefficient which is close to {\it zero}
in many applications, 
 $\alpha$ is the Biot-Willis constant which is close to {\it one},
 $\kappa$ is the permeability tensor,
$\lambda$ and $\mu$ are the Lam\'e constants, 
and $\gradss\bld u = {(\grads\bld u + 
\grads^T\bld u)}/{2}$ is the symmetric gradient operator.
Here we consider homogeneous boundary condition for simplicity. More general boundary 
conditions, c.f. \cite{Phillips07a}, can be handled with minor modification.

There are extensive literature on the study of spatial discretization for the  Biot's consolidation model with the finite element methods.
The early work of Murad et. al. \cite{MuradLoula92, MuradLoula94,Murad96} studied the stability of the scheme using 
stable pair of Stokes finite elements for displacement and pressure.
The monograph of Lewis and Schrefler \cite{Lewis98}, c.f. also references therein,
discussed the finite element discretization using continuous Galerkin method for both the the displacement and pressure.
Phillips and Wheeler proposed and analyzed an algorithm that combines 
the mixed methods for pressure and a continue/discontinuous Galerkin method for displacement 
\cite{Phillips07a, Phillips07b,Phillips08}. 
See also the discontinuous Galerkin methods \cite{Liu09, Chen13,Wheeler14a, Riviere17},
Galerkin least square method \cite{Korsawe05},  
the pressure-stabilized methods \cite{Wan02,White08,Berger15,Berger17}, 
the mixed methods \cite{Ferronato10, Yi14, Lee16,Oyarzua16, Lee17,Lee17b}, and the nonconforming methods \cite{Yi13, Boffi16, Hu17}.

In this paper, we consider the discretization to \eqref{eqns} using a displacement-pressure formulation with a high-order, superconvergent HDG method 
for the pressure Poisson operator \cite{Lehrenfeld:10, Oikawa:15}, and a high-order, divergence-conforming 
HDG method for the elasticity operator \cite{Lehrenfeld:10,LehrenfeldSchoberl16, FuLehrenfeld18, FuLehrenfeld18a}.
The resulting differential algebraic system (DAE) is solved using  backward differentiation formula (BDF) \cite{HairerWanner10}.
We present optimal a priori error estimates for the resulting semi-discrete and full-discrete schemes. The method is proven to be 
free from 
Poisson locking as $\lambda\rightarrow \infty$, and is numerically shown to be also free from pressure oscillation in the case of
low permeability with small time step size \cite{Phillips09}.
To reach a convergence rate of $k+1$ for an energy norm, the fully discrete scheme has a set of globally coupled degrees of freedom (after static
condensation) consists of 
polynomials of degree $k+1$ for the normal displacement, 
polynomials of degree $k$ for the tangential displacement, and 
polynomials of degree $k-1$ for the pressure per facet (edge in 2D, face in 3D). 
We also discuss an improvement of this base scheme by slightly  relaxing the $H(\mathrm{div})$-conformity of the displacement space so that 
only unknowns of polynomial degree $k$ are involved for normal-continuity, c.f. \cite{Lederer17, FuLehrenfeld18a}. 
This modification results a globally coupled degrees of freedom  
consists of (vector) polynomials of degree $k$ for the displacement, and degree $k-1$ for the pressure per facet.
It does not deteriorate the convergence rate,  and allow for optimality of the method also in the sense of 
superconvergent HDG methods. 

The rest of the paper is organized as follows. In Section \ref{sec2:disc}, the semi-discrete scheme is introduced and analyzed. 
In Section \ref{sec3:time}, the fully-discrete scheme is introduced and analyzed. The numerical results supporting the theory is 
presented in Section \ref{sec:numerics}. And a conclusion is drawn in Section \ref{sec:conclusion}.

\section{Semi-discrete Scheme}
\label{sec2:disc}
\subsection{Preliminaries}
Let $\Oh=\{T\}$ be a conforming simplicial triangulation of $\Omega$.
Let $\Eh=\{F\}$ be the collection of facets (edges in 2D, faces in 3D) in $\Oh$.
For any element $T \in\Oh$, we denote by $h_T$ its diameter and we denote by $h$ the
maximum diameter over all mesh elements. 

We distinguish functions with support only on facets indicated by a {\it hat } notation, 
e.g. $\widehat {\phi}$, $\widehat{\bld \xi}$,  with functions with support also on the volume elements.
Compositions of functions supported on volume elements (without {\it hat } notation)
and those only on facets 
are used for the HDG discretization and indicated by underlining, e.g.,
$\compPhi = (\phi ,\widehat{\phi})$,
$\compXi = (\bld \xi ,\widehat{\bld \xi})$.
To simplify notation, we denote the compound spaces 
\begin{align*}
  \compPspace : = &\;H_0^2(\Omega)\times H_0^1(\Eh), \text{ and }\;
 \compUspace : = [H_0^2(\Omega)]^d\times [H_0^1(\Eh)]^d.
\end{align*}
We denote the tangential component of a vector 
$\bld v$ on a facet $F$ by $(\bld v)^t = \bld v-(\bld v\cdot \bld n)\bld n$, where $\bld n$ is the normal direction on $F$.
Furthermore, for any function $\phi\in H^2_0(\Omega)$, we denote $\compPhi :=(\phi, \phi|_{\Eh})\in \compPspace$, 
and for any function  $\bld \xi\in [H^2_0(\Omega)]^d$, we denote 
$\compXi :=(\bld \xi, (\bld \xi)^t|_{\Eh}) \in \compUspace$.

For a domain $D\in \bR^d$, 
we denote $(\cdot,\cdot)_D$ as the standard $L^2$-inner product on $D$. Whenever there is no confusion, 
we simply denote $(\cdot,\cdot)$ as the inner product on the whole domain $\Omega$.

Finally, to simplify the presentation of our analysis, we assume the permeability tensor $\kappa$ is a constant 
scalar throughout the domain $\Omega$. 
However, we note that the method is applicable to the more general case of a fully tensorial (possibly piecewise defined) permeability.

\subsection{Finite elements}
We 
consider
an HDG method which approximates 
the pressure and displacement on the mesh 
$\Oh$,
and the pressure and {\it tangential } component of the displacement on 
the mesh skeleton $\Eh$:

\begin{subequations}
\label{space}
\begin{align}
\label{space-1}
\Wh : =&\; \prod_{T\in\Oh}\pol^{k}(T),\\
\WFh := &\;\{\wf \in \prod_{F\in\Eh}\pol^{k-1}(F), \;\;
\wf = 0 \,\;\;\forall F\subset \partial\Omega\},
\\
\Vh : =&\; \{\bld v\in \prod_{T\in\Oh}[\pol^{k+1}(T)]^d, \;\;
\jmp{\bld v\cdot\bld n}_F = 0 \,\forall F\in\Eh\}\subset H_0(\mathrm{div},\Omega),\\
\VFh := &\;\{\vf\in \prod_{F\in\Eh} [\pol^{k}(F)]^d, \;\;
\vf\cdot\bld n = 0 \,\forall F\in\Eh, \;\;
\vf = 0 \,\;\;\forall F\subset \partial\Omega\},
\end{align}
where $\jmp{\cdot}$ is the usual jump operator, 
$\pol^m$ the space of polynomials up to degree $m$.
Note that functions in $\WFh$ and $\VFh$ are defined only on the mesh skeleton, and the
normal component of functions in $\VFh$ is {\it zero}.
Here the polynomial degree $k\ge 1$ is a positive integer. 
%

%

To further simplify notation, we denote the composite spaces 
\begin{alignat*}{2}
 \compPhspace : =&\; \Wh\times \WFh, &&\text{ and }
 \compUhspace : = \Vh\times \VFh.
\end{alignat*}

\end{subequations}

\subsection{The semi-discrete numerical scheme}
First, we introduce the following $L^2$ projections on the facets:
 \begin{align*}
    \PiWF: L^2(F)\rightarrow \pol_{k-1}(F), 
  \quad \int_F (\PiWF f) w \, \mathrm{ds} = \int_{F}f\,w\, \mathrm{ds} \quad \forall w\in \pol_{k-1}(F),\\
    \PiVF: [L^2(F)]^d\rightarrow [\pol_k(F)]^d, 
  \quad \int_F (\PiVF \bld f)\bld v \, \mathrm{ds} = \int_{F}\bld f\,\bld v\, \mathrm{ds} \quad \forall \bld v\in [\pol_k(F)]^d.
 \end{align*}
 Then, for all $\compPhi=(\psi,\psif), \compPsi=(\psi,\psif)\in \compPhspace+\compPspace$, and 
 $\compXi=(\bld \xi,\xif), \compEta=(\bld \eta, \etaf)\in \compUhspace+\compUspace$, 
 we introduce the bilinear forms for the diffusion and elasticity operators, respectively, 
\begin{subequations}
\label{bilinearforms}
 \begin{align}
  \label{diffusion-1}
  a_{h}(\compPhi, \compPsi) :=&\; 
  \sum_{T\in\Oh}\int_T\kappa\,\grads \phi\cdot\grads \psi
  -\int_{\partial T}\kappa\,\grads \phi\cdot\bld n\, \jmp{\compPsi}\,\mathrm{ds}\\
&\; 
-\int_{\partial T}\kappa\,\grads \psi\cdot \bld n\jmp{\compPhi}\,\mathrm{ds}+ 
\int_{\partial T}\kappa\frac{\tau}{h}\PiWF\jmp{\compPhi}\, \PiWF\jmp{\compPsi}\,\mathrm{ds},
\nonumber\\
  \label{elas-1}
  b_{h}(\compXi, \compEta) :=&\; 
  \sum_{T\in\Oh}\int_T2\mu\,\gradss(\bld \xi):\gradss(\bld \eta)
+  \lambda\,\mathrm{div}(\bld \xi)\mathrm{div}(\bld \eta)
  \,\mathrm{dx}\\
  &\;  -\int_{\partial T}2\mu\,\gradss(\bld \xi)\bld n\cdot \jmp{\compEta^t}\,\mathrm{ds} 
-\int_{\partial T}2\mu\,\gradss(\bld \eta)\bld n\cdot \jmp{\compXi^t}\,\mathrm{ds}\nonumber\\
&\;  + \int_{\partial T}\mu\frac{\tau}{h}\PiVF\jmp{\compXi^t}\cdot \PiVF\jmp{\compEta^t}\,\mathrm{ds},
\nonumber
 \end{align}
\end{subequations}
 where 
  $\jmp{\compPhi}= \phi-\phif$ and 
 $\jmp{\compXi^t}= (\bld \xi)^t-\xif$ denote 
 the jumps between interior and facet unknowns, and 
 $\tau = \tau_0 k^2$ with $\tau_0$ a sufficiently large positive constant. 
 
 We note that as long as $\phi$ and 
 $\bld \xi$ are finite element functions in 
 $\Wh$ and $\Vh$, respectively, we have
 \begin{align}
 \label{idd1}
  \int_{\partial T} \kappa \grads  \phi \cdot\bld n \, \jmp{\compPsi} ds
= &\;\int_{\partial T} \kappa \grads\phi \cdot\bld n \, \PiWF \jmp{\compPsi} ds,\\
 \label{idd2}
  \int_{\partial T} 2 \mu \gradss(\bld \xi) \bld n \cdot \jmp{\compEta^t} ds
=&\; \int_{\partial T} 2 \mu \gradss(\bld \xi) \bld n \cdot \PiVF \jmp{\compEta^t} ds
 \end{align}
as 
$\kappa \grads \phi\cdot \bld n$ 
is a polynomial of degree $k-1$, and 
$2 \mu \gradss(\bld \xi) \bld n$ 
is a polynomial of degree $k$ on each facet.
 
The semi-discrete numerical scheme then reads: 
Find $\compPh = (p_h,\pf_h)\in \compPhspace$ and 
$\compUh=(\bld u_h,\uf_h)\in \compUhspace$ such that 
\begin{subequations}
\label{scheme}
\begin{align}
\label{scheme-1}
(c_s \dot{p}_{h}+\alpha\,\divs(\dot{\bld u}_h), w_h)
+
a_h(\compPh, \compW_h) =&\; (f, w_h), \quad \forall \compW_h=(w_h,\wf_h) \in \compPhspace,\\
\label{scheme-2}
b_h(\compUh, \compVh)
-(p_h,\alpha\,\divs(\bld v_h))=&\; (\bld g, \bld v_h), \quad \forall \compVh=(\bld v_h,\vf_h) \in \compUhspace.
\end{align}
\end{subequations}

\subsection{Semi-discrete error estimates}
We write
\[
 A\preceq B
\]
to indicate that there exists a constant $C$, independent of the mesh size $h$, the parameters
$c_s,\alpha, \mu,\lambda,\kappa$
and the numerical solution, such that 
$A\le CB$.

We denote the following (semi)norms:
\begin{subequations}
\label{norms}
\begin{align}
 \label{norm-p}
 \|\compW\|_{1,h} := &\;
 \left(
\sum_{T\in\Oh} \|\grads w\|^2_T
+\frac{1}{h}\|\jmp{\compW}\|^2_{\partial T}
 \right)^{1/2},\\
 \label{norm-u}
 \|\compV\|_{\mu,h} := &\;
 \left(
\sum_{T\in\Oh} 2\mu\|\gradss \bld v\|^2_T
+\frac{2\mu}{h}\|\PiVF\jmp{\compV^t}\|^2_{\partial T}
 \right)^{1/2},\\
 \label{norm-energy2}
 \|\compV\|_{\mu,*,h} := &\;
\Big(
\|\compV\|_{\mu,h}^2+
\sum_{T\in\Oh} 2\mu h
\|\gradss(\bld v)\bld n\|^2_{\partial T}
 \Big)^{1/2},\\
  \label{norm-total}
 \vertiii{\{\compW, \compV\}}_{h} := &\;
 \left(c_s\|w\|^2+\|\compV\|_{\mu,h}^2
 +\lambda \|\divs\,\bld u\|^2 \right)^{1/2}.
\end{align}
\end{subequations}
We also denote the $H^{s}$-norm  on $\Omega$ as $\|\cdot\|_{s}$, and when 
$s=0$, we simply denote $\|\cdot\|$ as the $L^2$-norm on $\Omega$.

Coercivity of the bilinear forms \eqref{bilinearforms} follows directly from 
\cite{Oikawa:15,FuLehrenfeld18a}.
\begin{lemma}
 \label{lemma:coercivity}
Let the stabilization parameter $\tau_0$ be sufficiently large. 
Then, for any function $\compW_h\in \compPhspace$, there holds
 \begin{subequations}
 \label{coercivity}
 \begin{align}
 \label{coercivity-1}
\kappa\|\compW_h\|_{1,h}^2  \preceq a_h(\compW_h,\compW_h),
 \end{align}
 and for any function $\compVh\in\compUhspace$, there holds
 \begin{align}
 \label{coercivity-2}
\|\compVh\|_{\mu,h}^2
+\lambda\|\divs\, \bld v_h\|^2
\preceq b_h(\compVh,\compVh).
 \end{align} 
 \end{subequations}

\end{lemma}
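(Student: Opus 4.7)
The plan is to run a standard symmetric-IP-type coercivity argument adapted to the projected-jump HDG formulation, first for $a_h$ and then by near-verbatim repetition for $b_h$.

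First I would take $\compPsi=\compW_h$ in \eqref{diffusion-1}, so the two consistency terms merge. Using identity \eqref{idd1}, the consistency term becomes
$$
-2\sum_{T\in\Oh}\int_{\partial T}\kappa\,\grads w_h\cdot\bld n\;\PiWF\jmp{\compW_h}\,\mathrm{ds}.
$$
Apply Cauchy--Schwarz on each facet and the $k$-explicit discrete trace inequality $\|\grads w_h\cdot\bld n\|_{\partial T}\preceq k\,h^{-1/2}\|\grads w_h\|_T$, then Young's inequality with small parameter $\epsilon$. The result, for each $T$, is a bound of the form $\epsilon\kappa\|\grads w_h\|_T^2 + C\kappa k^2/(\epsilon h)\|\PiWF\jmp{\compW_h}\|_{\partial T}^2$. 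Because the stabilization carries the matching factor $\tau=\tau_0 k^2$, choosing $\epsilon$ small and $\tau_0$ sufficiently large (independently of $h$, $k$, $\kappa$) absorbs both pieces and yields
$$
\kappa\sum_{T\in\Oh}\Bigl(\|\grads w_h\|_T^2+\tfrac{1}{h}\|\PiWF\jmp{\compW_h}\|_{\partial T}^2\Bigr)\preceq a_h(\compW_h,\compW_h).
$$

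The remaining step is to upgrade $\PiWF\jmp{\compW_h}$ to the full jump $\jmp{\compW_h}$ appearing in the norm \eqref{norm-p}. Because $\wf_h\in\pol^{k-1}$, we have $(I-\PiWF)\jmp{\compW_h}=(I-\PiWF)w_h|_F$; the Lehrenfeld--Schöberl projected-jump inverse estimate
$$
h^{-1}\|(I-\PiWF)w_h\|_{\partial T}^2\preceq \|\grads w_h\|_T^2
$$
(valid precisely because $w_h\in\pol^k(T)$ and the facet polynomial degree is one lower) combines with the bound above to give \eqref{coercivity-1}. For $b_h$, I would mirror the argument with $\grads\leftrightarrow\gradss$, $\PiWF\leftrightarrow\PiVF$, and \eqref{idd1}$\leftrightarrow$\eqref{idd2}; the $\lambda\|\divs\bld v_h\|^2$ contribution is already sitting in the volume term and needs no further work, and the tangential projection introduces no new issues since $\bld n\cdot\jmp{\compVh^t}=0$. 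Coercivity in $\gradss$ for piecewise polynomials (the broken Korn inequality with projected-jump control of the tangential trace) gives \eqref{coercivity-2} in the stated form, without needing to convert to $\grads$.

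The main obstacle is the projected-jump inverse estimate used in the upgrade step, together with its symmetric-gradient/tangential analogue for $b_h$: this is precisely the ingredient that lets the facet degree be reduced from $k$ to $k-1$ while preserving stability, and it is the one nontrivial polynomial inequality behind the whole method. Once it is taken as given (as in \cite{Oikawa:15,FuLehrenfeld18a}), everything else reduces to standard IP bookkeeping.
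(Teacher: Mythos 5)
Your argument is correct and is essentially the proof the paper has in mind: the paper simply cites \cite{Oikawa:15,FuLehrenfeld18a} for this lemma, and those references run exactly your combination of the discrete trace inequality, Young's inequality with the $\tau=\tau_0 k^2$ stabilization, and the projected-jump estimate $h^{-1}\|(I-\PiWF)w_h\|_{\partial T}^2\preceq\|\grads w_h\|_T^2$ to recover the full jump in \eqref{norm-p} (noting that no such upgrade, and no Korn inequality, is needed for \eqref{coercivity-2} since \eqref{norm-u} is stated with $\PiVF\jmp{\cdot}$ and $\gradss$). The only cosmetic point is that your parenthetical invocation of a broken Korn inequality is superfluous for the lemma as stated, as you yourself observe.
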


Consistency of the semi-discrete scheme \eqref{scheme} follows directly from integration by parts.
\begin{lemma}
\label{lemma:consistency}
Let $(p, \bld u)\in H_0^2(\Omega)\times \bld H_0^2(\Omega)$ be the solution to the equations \eqref{eqns}.
We have 
\begin{align*}
(c_s \dot{p}+\alpha\,\divs(\dot{\bld u}), w)
+
a_h(\compP, \compW) =&\; (f, w), \quad \forall \compW_h=(w,\wf) \in \compPhspace+\compPspace,\\
b_h(\compU, \compV)
-(p,\alpha\,\divs(\bld v))=&\; (\bld g, \bld v), \quad \forall \compV=(\bld v,\vf) \in \compUhspace
+\compUspace.
\end{align*}
\end{lemma}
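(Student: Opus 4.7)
The plan is to test the strong form \eqref{eq1}--\eqref{eq2} elementwise, integrate by parts on each $T\in\Oh$, and match every piece that arises with a term of $a_h(\compP,\compW)$, $b_h(\compU,\compV)$, or the $(p,\alpha\,\divs \bld v)$ coupling. Two features of the underlying regularity make the bookkeeping trivial: because $p\in H_0^2(\Omega)$ and $\bld u\in [H_0^2(\Omega)]^d$, the traces on facets $\pf := p|_{\Eh}$ and $\uf := (\bld u)^t|_{\Eh}$ are single valued, so $\jmp{\compP}=p-\pf=0$ and $\jmp{\compU^t}=(\bld u)^t-\uf=0$, which immediately kills the symmetry and penalty pieces of both bilinear forms. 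The same regularity yields $\kappa\grads p\in H(\mathrm{div},\Omega)$ and $2\mu\gradss(\bld u)\in H(\mathrm{div};\bR^{d\times d})$, so the normal stresses are single valued on interior facets.

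For the pressure equation, I would multiply \eqref{eq1} by $w$, sum over $\Oh$, and use $-(\divs(\kappa\grads p),w)_T=(\kappa\grads p,\grads w)_T-\int_{\partial T}\kappa\grads p\cdot\bld n\,w\,\mathrm{ds}$. Splitting $w=(w-\wf)+\wf$ on each $\partial T$, the $\wf$-piece telescopes to zero because $\kappa\grads p\cdot\bld n$ is single valued across interior facets (contributions from adjacent elements carry opposite orientations) and $\wf$ vanishes on $\partial\Omega$. What remains, $\sum_T\int_{\partial T}\kappa\grads p\cdot\bld n\,\jmp{\compW}\,\mathrm{ds}$, is exactly the second term of $a_h(\compP,\compW)$; the third and fourth terms are zero thanks to $\jmp{\compP}=0$. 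This reproduces \eqref{scheme-1} with the exact solution in place of $\compPh$.

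For the elasticity equation, I would test \eqref{eq2} with $\bld v$ and integrate the viscous and volumetric parts by parts elementwise, using symmetry of $\gradss\bld u$ to replace $\grads\bld v$ by $\gradss\bld v$; this yields the volume part of $b_h(\compU,\compV)$ plus boundary terms $-\int_{\partial T}2\mu\gradss(\bld u)\bld n\cdot\bld v\,\mathrm{ds}$ (and an analogous $\lambda$-piece, matching the sign convention of \eqref{elas-1}). Writing $\bld v=(\bld v^t-\vf)+\vf+(\bld v\cdot\bld n)\bld n$ on each $\partial T$ and using single-valuedness of $\gradss(\bld u)\bld n$ together with $\vf=0$ on $\partial\Omega$, the $\vf$-contribution cancels in pairs, the normal-component piece combines with the $\lambda\divs\bld u$ boundary term and cancels similarly, and what remains is precisely the second term of $b_h$; the remaining two terms of $b_h$ drop out since $\jmp{\compU^t}=0$. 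For the pressure coupling I would integrate $(\alpha\grads p,\bld v)$ by parts elementwise to produce $-(p,\alpha\,\divs\bld v)+\sum_T\int_{\partial T}\alpha p\,\bld v\cdot\bld n\,\mathrm{ds}$; the boundary sum vanishes because both $\bld v\in\Vh$ and $\bld v\in[H_0^2(\Omega)]^d$ lie in $H_0(\mathrm{div},\Omega)$, so $\bld v\cdot\bld n$ is single valued on interior facets and zero on $\partial\Omega$.

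There is no real obstacle: the entire argument is elementwise integration by parts, and the only care required is tracking which boundary residuals are absorbed into which composite-jump term of the HDG forms. The projection identities \eqref{idd1}--\eqref{idd2} are not invoked here (their hypothesis is discreteness, not smoothness), but the corresponding role is played by the single-valuedness of $\kappa\grads p\cdot\bld n$ and $2\mu\gradss(\bld u)\bld n$ coming from $H^2$-regularity.
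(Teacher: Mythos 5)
Your proof is correct and follows exactly the route the paper intends: the paper gives no written proof beyond the remark that consistency ``follows directly from integration by parts,'' and your elementwise integration by parts, combined with the vanishing of $\jmp{\compP}$ and $\jmp{\compU^t}$ and the single-valuedness of the normal fluxes coming from the $H^2$-regularity, is precisely that argument spelled out. The only point worth noting is that your parenthetical about the $\lambda$-term quietly absorbs a sign inconsistency between the stress in \eqref{eq2} and the $+\lambda\,\divs(\bld\xi)\divs(\bld\eta)$ term of \eqref{elas-1}, which is an issue with the paper's statement of the PDE rather than with your proof.
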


We use the technique of {\it elliptic  projectors} \cite{Wheeler75} to derive optimal convergent error estimates. 
Let $\compPP = (\Pi p, \widehat{\Pi}p)\in \compPhspace$ and 
$\compUP = (\Pi \bld u, \widehat{\Pi}\bld u)\in \compUhspace$ be the 
projectors defined as follows:
\begin{subequations}
\label{ellip-proj}
\begin{alignat}{3}
\label{ellip-proj-1}
a_h(\compPP-\compP, \compW_h) =&\; 0, &&\quad \forall \compW_h=(w_h,\wf_h) \in \compPhspace,\\
\label{ellip-proj-2}
b_h(\compUP-\compU, \compVh)
-(\Pi p-p,\alpha\,\divs(\bld v_h))=&\; 
0, &&\quad \forall \compVh=(\bld v_h,\vf_h) \in \compUhspace. 
\end{alignat}
\end{subequations}
Note that the above coupling is weak since 
the pressure projector is purely determined by the first set of equations \eqref{ellip-proj-1}.

The approximation properties of these elliptic projectors follows directly from the 
corresponding analysis for the elliptic problems \cite{Oikawa:15, FuLehrenfeld18a}.

We shall assuming the following full $H^2$-regularity
 \begin{align}
 \label{dual}
\|\phi\|_{2}
\preceq \|\theta\|
 \end{align}
  for the dual problem
$ -  \grads(\kappa\,\grads \phi )  =\; \theta$ with homogeneous Dirichlet boundary conditions
 for any source term $\theta\in L^2(\Omega)$.
The estimate \eqref{dual} holds on convex domains.

\begin{lemma}
 \label{lemma:approx}
Let the stabilization parameter $\tau_0$ be sufficiently large. 
Let $\compPP\in \compPhspace$ and $\compUP\in \compUhspace$ be given by \eqref{ellip-proj}. 
 Assume the elliptic regularity result \eqref{dual} holds. Then, 
the following estimates holds:
 \begin{subequations}
  \label{approx:est}
  \begin{align}
  \label{approx:est-1}
   \|p-\Pi p\|\preceq&\; h^{k+1}\|p\|_{k+1},\\
  \label{approx:est-2}
   \|\compU-\compUP\|_{\mu,h}\preceq &\;h^{k+1}\left(\mu^{1/2}\|\bld u\|_{k+2}+
   \frac{\alpha}{\lambda^{1/2}}\|p\|_{k+1}\right),\\
  \label{approx:est-3}
   \|\divs(\bld u -\Pi\bld u)\|\preceq &\;h^{k+1}\left(\frac{\mu^{1/2}}{\lambda^{1/2}}\|\bld u\|_{k+2}+
   \|\divs\,\bld u\|_{k+1}+
   \frac{\alpha}{\lambda}\|p\|_{k+1}\right). 
  \end{align}
 \end{subequations}
\end{lemma}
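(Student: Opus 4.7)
The plan is to handle the pressure projector first because \eqref{ellip-proj-1} decouples from \eqref{ellip-proj-2}, and then use that estimate to drive the coupled displacement bound. For \eqref{approx:est-1}, the bilinear form $a_h$ coincides with the standard superconvergent HDG discretization of the Poisson operator from \cite{Oikawa:15}, so Galerkin orthogonality from \eqref{ellip-proj-1}, coercivity \eqref{coercivity-1}, and continuity of $a_h$ on the natural energy-star norm give the optimal energy bound $\|\compP-\compPP\|_{1,h}\preceq h^k\|p\|_{k+1}$. An Aubin--Nitsche duality argument based on the $H^2$-regularity \eqref{dual} then yields the superconvergent $L^2$ estimate $\|p-\Pi p\|\preceq h^{k+1}\|p\|_{k+1}$.

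For \eqref{approx:est-2}, introduce the canonical BDM-type HDG interpolant $\compUI=(I\bld u,\widehat{I}\bld u)\in\compUhspace$, which is $H(\divs)$-conforming, satisfies the commuting property $\divs(I\bld u)=\Pi_{\Wh}\divs\bld u$, and enjoys the standard bounds $\|\compU-\compUI\|_{\mu,*,h}\preceq h^{k+1}\mu^{1/2}\|\bld u\|_{k+2}$ and $\|\divs(\bld u-I\bld u)\|\preceq h^{k+1}\|\divs\bld u\|_{k+1}$ (see \cite{FuLehrenfeld18a}). Set $\compVh:=\compUP-\compUI$ and combine coercivity \eqref{coercivity-2} with \eqref{ellip-proj-2} to obtain
\begin{equation*}
\|\compVh\|_{\mu,h}^2+\lambda\|\divs\bld v_h\|^2\preceq b_h(\compU-\compUI,\compVh)+(\Pi p-p,\,\alpha\,\divs\bld v_h).
\end{equation*}
The first term on the right is controlled by continuity of $b_h$ on the $\|\cdot\|_{\mu,*,h}$ norm together with the Cauchy--Schwarz bound $\lambda\|\divs(\bld u-I\bld u)\|\,\|\divs\bld v_h\|$ for the volumetric piece. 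For the coupling term we split $\alpha\|\Pi p-p\|\,\|\divs\bld v_h\|=\bigl(\alpha\lambda^{-1/2}\|\Pi p-p\|\bigr)\bigl(\lambda^{1/2}\|\divs\bld v_h\|\bigr)$ and invoke \eqref{approx:est-1}. Young's inequality absorbs $\|\compVh\|_{\mu,h}$ and $\lambda^{1/2}\|\divs\bld v_h\|$ into the left-hand side, and a triangle inequality delivers \eqref{approx:est-2}.

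The same displayed estimate simultaneously bounds $\lambda^{1/2}\|\divs(\Pi\bld u-I\bld u)\|$ by $h^{k+1}\bigl(\mu^{1/2}\|\bld u\|_{k+2}+\alpha\lambda^{-1/2}\|p\|_{k+1}\bigr)$; adding the commuting-diagram bound $\|\divs(\bld u-I\bld u)\|\preceq h^{k+1}\|\divs\bld u\|_{k+1}$ and dividing through by $\lambda^{1/2}$ produces \eqref{approx:est-3}.

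The main obstacle is keeping the hidden constants robust with respect to the Lam\'e parameter $\lambda$ so that the semi-discrete method does not degrade in the incompressible/locking limit. This forces us to carry the $\lambda\|\divs\bld v_h\|^2$ term explicitly through the coercivity step, to exploit the $H(\divs)$-conformity of $\Vh$ via the commuting BDM-type interpolant (so that $\|\divs(\bld u-I\bld u)\|$ scales optimally and independently of $\mu,\lambda$), and to split the pressure-coupling term with a $\lambda^{-1/2}$--$\lambda^{1/2}$ weighting before Young's inequality. Without these three ingredients the $\alpha/\lambda$ scaling in \eqref{approx:est-3} and the locking-free character advertised in the introduction would be lost.
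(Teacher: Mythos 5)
Your overall route is the same as the paper's: the pressure bound is the standard Oikawa-type energy-plus-duality argument, and the displacement bounds come from comparing $\compUP$ with a BDM-type interpolant $\compUI$, invoking coercivity \eqref{coercivity-2} and the defining relation \eqref{ellip-proj-2}, and splitting the pressure-coupling term with the $\lambda^{-1/2}$--$\lambda^{1/2}$ weighting. That part is all consistent with the paper's proof.

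There is, however, one genuine gap in your treatment of the volumetric consistency term. You bound $\lambda\bigl(\divs(\bld u-I\bld u),\divs\bld v_h\bigr)$ by Cauchy--Schwarz as $\lambda\|\divs(\bld u-I\bld u)\|\,\|\divs\bld v_h\|$ and then absorb $\lambda^{1/2}\|\divs\bld v_h\|$ by Young's inequality. That leaves $\lambda^{1/2}\|\divs(\bld u-I\bld u)\|\preceq \lambda^{1/2}h^{k+1}\|\divs\bld u\|_{k+1}$ on the right-hand side of your bound for $\|\compUP-\compUI\|_{\mu,h}$, and this term does not appear in \eqref{approx:est-2}; since $\preceq$ hides constants independent of $\lambda$, it cannot be absorbed into $\mu^{1/2}\|\bld u\|_{k+2}+\alpha\lambda^{-1/2}\|p\|_{k+1}$, so the estimate you obtain is not robust as $\lambda\to\infty$ and does not prove the lemma as stated. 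The fix is already contained in the commuting property you quote: since $\divs\bld v_h|_T\in\pol^k(T)$ for every $\bld v_h\in\Vh$ and $\divs(I\bld u)=\Pi_{\Wh}\divs\bld u$ is the $L^2$ projection onto piecewise $\pol^k$, the term $\bigl(\divs(\bld u-I\bld u),\divs\bld v_h\bigr)$ is exactly \emph{zero}, so the $\lambda$-weighted piece of $b_h(\compU-\compUI,\compVh)$ drops out entirely and only $\|\compU-\compUI\|_{\mu,*,h}\|\compVh\|_{\mu,h}$ survives. This orthogonality is precisely why the paper's bound $b_h(\compU-\compVh,\compUP-\compVh)\preceq\|\compU-\compVh\|_{\mu,*,h}\|\compUP-\compVh\|_{\mu,h}$ holds with no divergence contribution, and it is the mechanism behind the locking-free character you correctly identify as the main obstacle; using it as an inequality rather than an identity is where your argument loses the result. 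The remainder of your derivation of \eqref{approx:est-3} (reading off $\lambda^{1/2}\|\divs(\Pi\bld u-I\bld u)\|$ from the same coercivity display, dividing by $\lambda^{1/2}$, and adding the interpolation error) is correct once this is repaired.
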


\begin{proof}
 The pressure estimate follows from \cite{Oikawa:15}.
The displacement estimates follow from \cite{FuLehrenfeld18a}.
In particular, we introduce $\compVh:=(\Pi_V \bld u, \PiVF \bld u^t)$ where $\Pi_V$ is the classical BDM interpolator, 
\cite[Proposition 2.3.2]{BoffiBrezziFortin13}, and estimate the error by first applying a triangle inequality to split
\[
 \|\compU-\compUP\|_{\mu,h} = 
  \|\compVh-\compU\|_{\mu,h}+ \|\compUP-\compVh\|_{\mu,h}.
\]
Using coercivity result in Lemma \ref{lemma:coercivity}, we get
\begin{align*}
&  \|\compUP-\compVh\|_{\mu,h}^2
  +\lambda \|\mathrm{div}(\Pi \bld u-\bld v_h)\|^2 \preceq\; b_h(\compUP-\compVh, \compUP - \compVh)\\
&\hspace{2cm}  =\;  b_h(\compU-\compVh, \compUP - \compVh) + (\Pi p-p,\alpha\,\divs(\Pi \bld u-\bld v_h))\\
&\hspace{2cm}  \preceq\;  
\|\compU-\compVh\|_{\mu,*,h}\|\compUP-\compVh\|_{\mu,h}
+\alpha \|\Pi p-p\| \|\divs(\Pi \bld u-\bld v_h)\|.
\end{align*}
Hence, applying the triangle inequality,
\begin{align*}
 \|\compU-\compUP\|_{\mu,h}\preceq &\;
  \|\compU-\compVh\|_{\mu,*,h}+\frac{\alpha}{\lambda^{1/2}} \|p-\Pi p\|,\\
 \|\divs(\bld u-\compUP)\|\preceq&\; 
  \frac{1}{\lambda^{1/2}}\|\compU-\compVh\|_{\mu,*,h}
  + \|\divs(\bld u-\Pi_V\bld u)\|
  +\frac{\alpha}{\lambda} \|p-\Pi p\|.
\end{align*}
The estimates \eqref{approx:est-2}, \eqref{approx:est-3} now follows from the standard approximation 
properties of the BDM interpolator $\Pi_V$.
\end{proof}

To further simplify notation, we denote 
\begin{align}
\label{notation}
 \compeu = \compUh-\compUP, \quad
 \compep = \compPh-\compPP, \quad\du = \bld u-\Pi\bld u, \quad\dpp = p-\Pi p.
\end{align}

Combining the numerical scheme \eqref{scheme} with the consistency result in Lemma \ref{lemma:consistency}, 
adding and subtracting the above elliptic projectors, we arrive at the following error equations:
\begin{subequations}
\label{error-eq}
\begin{align}
\label{error-eq-1}
 (c_s \dot{\ep}+\alpha\,\divs(\dot{\eu}), w_h)
+
a_h(\compep, \compW_h) =&\;
 (c_s \dot{\dpp}+\alpha\,\divs(\dot{\du}), w_h),\\
\label{error-eq-2}
b_h(\compeu, \compVh)
-(\ep,\alpha\,\divs(\bld v_h))=&\; 0,
\end{align}
\end{subequations}
for all $\compW_h=(w_h,\wf_h) \in \compPhspace$ and 
$\compVh=(\bld v_h,\vf_h) \in \compUhspace$.

By the inf-sup stability \cite{BoffiBrezziFortin13} of the finite elements pair 
$\Wh\times \Vh\subset L^2(\Omega)\times H_0(\mathrm{div},\Omega)$, we have the following 
pressure estimate.
\begin{lemma}
 \label{lemma:inf-sup}
 Let $\overline{\ep}$ be the average of $\ep$ on $\Omega$. Then, we have 
 \[
  \alpha \|\ep-\overline{\ep}\|\preceq \mu^{1/2}\|\compeu\|_{\mu,h}+\lambda \|\divs\,\eu\|.
 \]
\end{lemma}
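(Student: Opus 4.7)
The plan is to construct a test function $\compVh$ to plug into the displacement error equation \eqref{error-eq-2} so that the left-hand side reveals $\alpha\|\ep - \overline{\ep}\|^2$. Since $\ep - \overline{\ep}$ is mean-free and lies in $W_h$, the standard inf-sup stability (surjectivity of $\divs$ together with a BDM-type Fortin operator) furnishes $\bld v_h \in \Vh$ with
\[
  \divs \bld v_h = -(\ep - \overline{\ep}), \qquad \|\gradss \bld v_h\|_{\Oh} \preceq \|\ep - \overline{\ep}\|.
\]
I would then extend this to the composite object $\compVh := (\bld v_h, \PiVF \bld v_h^t) \in \compUhspace$. The clever bit about this choice is that $\PiVF \jmp{\compVh^t} = \PiVF(\bld v_h^t - \PiVF \bld v_h^t) = 0$, so both the tangential stabilization term and the second consistency term in $b_h(\compeu, \compVh)$ that sees $\jmp{\compVh^t}$ collapse to zero after using the identity \eqref{idd2} (since $\gradss\eu\,\bld n$ has polynomial degree $k$ on each facet).

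Plugging $\compVh$ into \eqref{error-eq-2} gives $(\ep, \alpha \divs \bld v_h) = b_h(\compeu, \compVh)$. Because $\ep - \overline{\ep}$ has mean zero and $\divs \bld v_h = -(\ep - \overline{\ep})$,
\[
  -(\ep, \divs \bld v_h) \;=\; (\ep, \ep - \overline{\ep}) \;=\; \|\ep - \overline{\ep}\|^2,
\]
so $\alpha \|\ep - \overline{\ep}\|^2 = -b_h(\compeu, \compVh)$.

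It remains to bound $|b_h(\compeu, \compVh)|$. After the two vanishing terms noted above, what survives is the interior symmetric-gradient term, the $\lambda$-divergence term, and one consistency term $-\int_{\partial T} 2\mu \gradss \bld v_h \bld n \cdot \jmp{\compeu^t}$. The first is controlled by $\|\compeu\|_{\mu,h}\|\compVh\|_{\mu,h}$; the last, after using \eqref{idd2} on $\bld v_h$ and a scaled trace/inverse inequality, is likewise absorbed into $\|\compeu\|_{\mu,h}\|\compVh\|_{\mu,h}$; and the $\lambda$-term gives $\lambda \|\divs \eu\|\, \|\divs \bld v_h\| = \lambda \|\divs \eu\|\, \|\ep - \overline{\ep}\|$. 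Collecting,
\[
  \alpha \|\ep - \overline{\ep}\|^2 \preceq \bigl(\mu^{1/2}\|\compeu\|_{\mu,h} + \lambda \|\divs \eu\|\bigr) \|\ep - \overline{\ep}\|,
\]
where I used $\|\compVh\|_{\mu,h} \preceq \mu^{1/2}\|\gradss \bld v_h\| \preceq \mu^{1/2} \|\ep - \overline{\ep}\|$. Dividing by $\|\ep - \overline{\ep}\|$ (assumed nonzero; otherwise the claim is trivial) yields the stated bound.

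The main obstacle is the sharp handling of the $\lambda$-term: a naive Cauchy–Schwarz inside $b_h$ would yield $\lambda^{1/2}\|\divs \eu\|$ scaled by $\lambda^{1/2}\|\divs \bld v_h\|$ on the right, which is the wrong power of $\lambda$. The point is that $\|\divs \bld v_h\| = \|\ep - \overline{\ep}\|$ is used directly (not squared and absorbed), so one full factor of $\lambda$ transfers to $\|\divs \eu\|$. Equally crucial is the cancellation engineered by choosing $\vfh = \PiVF \bld v_h^t$, without which the stabilization and one of the symmetric-gradient consistency terms would clutter the estimate with extra $\|\compVh\|_{\mu,*,h}$-type quantities.
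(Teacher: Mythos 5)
Your argument is correct and is essentially the paper's own proof: both construct, via inf-sup stability and a BDM-type Fortin operator, a discrete pair $\compVh\in\compUhspace$ with prescribed divergence $\pm(\ep-\overline{\ep})$ and $\|\compVh\|_{\mu,h}\preceq\mu^{1/2}\|\ep-\overline{\ep}\|$, test \eqref{error-eq-2} with it, use $(\divs\bld v_h,\overline{\ep})=0$, and apply Cauchy--Schwarz while keeping the full factor $\lambda\|\divs\bld v_h\|=\lambda\|\ep-\overline{\ep}\|$ unsplit. One small imprecision: since $(\bld v_h)^t$ is double-valued on interior facets, $\PiVF\jmp{\compVh^t}$ does not vanish identically on both adjacent elements for your choice of $\vfh$; however, that term is still controlled by $\mu^{1/2}\|\ep-\overline{\ep}\|$ through the stability of the Fortin operator, so the estimate is unaffected.
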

\begin{proof}
 By inf-sup stability \cite{BoffiBrezziFortin13}, 
 there exists a function $\compWW_h=(\bld w_h,\wwf_h)\in \compUhspace$ such that 
 \[
  \divs \,\bld w_h = \ep-\overline{\ep}, \quad \text{ and }
  \|\compWW_h\|_{\mu,h}\le \mu^{1/2}\|\ep-\overline{\ep}\|.
 \]
The estimate in Lemma \ref{lemma:inf-sup} 
follows directly by taking $\compVh = \compWW_h$ in \eqref{error-eq-2}, 
using the fact that $(\divs \,\bld w_h,\overline{\ep}) = 0$,
and 
applying the Cauchy-Schwarz inequality.
\end{proof}

Now, we are ready to present our main results on the semi-discrete error estimates.
\begin{theorem}
\label{thm:energy-semi}
Let the stabilization parameter $\tau_0$ be sufficiently large. 
Let $(\compPh,\compUh)\in \compPhspace\times\compUhspace$ be the solution to \eqref{scheme} 
with initial data 
$ \compPh(0)=\compPP(0)$ and $ \compUh(0) = \compUP(0)$.
 Then, the following estimate holds for all $T>0$:
 \begin{alignat}{2}
 \label{est-1}
\vertiii{\{\compep(T),\compeu(T)\}}_{h}^2
+\int_0^Ta_h(\compep,\compep)\,\mathrm{dt}
\preceq h^{2k+2}\,\Xi_1,
 \end{alignat}
 where 
 \[
  \Xi_1 = T\,\int_0^T (c_s+\frac{\alpha^2(\lambda+\mu)}{\lambda^2})
  \|\dot{p}\|_{k+1}^2
+ \frac{\mu(\lambda+\mu)}{\lambda}\|\dot{\bld u}\|_{k+2}^2
+ (\lambda+\mu)\|\divs\,\dot{\bld u}\|_{k+1}^2
\,\mathrm{dt}.
 \]

\end{theorem}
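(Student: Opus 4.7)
The plan is to derive an energy identity, control the right-hand side by a supremum-in-time / Cauchy-Schwarz-in-time argument that produces the $T$ factor in $\Xi_1$, and conclude by invoking the approximation estimates of Lemma \ref{lemma:approx}.

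First I would test \eqref{error-eq-1} with $\compW_h=\compep$ and \eqref{error-eq-2} with $\compVh=\dot{\compeu}$. Adding these two equations cancels the cross terms $\pm\alpha(\divs\dot{\eu},\ep)$, yielding
\begin{equation*}
\tfrac{1}{2}\tfrac{d}{dt}\bigl[c_s\|\ep\|^2+b_h(\compeu,\compeu)\bigr]+a_h(\compep,\compep)=(c_s\dot{\dpp}+\alpha\,\divs\dot{\du},\ep).
\end{equation*}
Integrating over $[0,T']$ for arbitrary $T'\in[0,T]$, using the prescribed initial conditions $\compep(0)=0$ and $\compeu(0)=0$, and invoking Lemma \ref{lemma:coercivity}, the left-hand side dominates $\tfrac{1}{2}\vertiii{\{\compep(T'),\compeu(T')\}}_h^2+\int_0^{T'}a_h(\compep,\compep)\,dt$.

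For the right-hand side, naive Cauchy--Schwarz would leave an uncontrolled $\|\ep\|$ factor (since $c_s$ may vanish). The key observation for the $\alpha\,\divs\dot{\du}$ contribution is that $\int_\Omega\divs\dot{\du}\,dx=0$, a consequence of $\bld u|_{\partial\Omega}=\bld 0$ and $\Pi\bld u\in\Vh\subset H_0(\mathrm{div},\Omega)$, which allows replacing $\ep$ by $\ep-\overline{\ep}$ and then invoking Lemma \ref{lemma:inf-sup} to obtain $\alpha\|\ep-\overline{\ep}\|\preceq\mu^{1/2}\|\compeu\|_{\mu,h}+\lambda\|\divs\,\eu\|$. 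Setting $M(T'):=\sup_{t\le T'}\bigl(c_s\|\ep(t)\|^2+b_h(\compeu(t),\compeu(t))\bigr)^{1/2}$, applying Cauchy-Schwarz in time, and pulling out the $L^\infty_t$ norms (which are bounded by $M(T')$ via coercivity), the entire right-hand side is majorized by $C\,M(T')\,T^{1/2}\bigl(\int_0^T c_s\|\dot{\dpp}\|^2+(\mu+\lambda)\|\divs\dot{\du}\|^2\,dt\bigr)^{1/2}$.

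Taking the supremum over $T'\in[0,T]$ on the left-hand side and applying Young's inequality to absorb $M(T)^2$ yields
\begin{equation*}
\vertiii{\{\compep(T),\compeu(T)\}}_h^2+\int_0^T a_h(\compep,\compep)\,dt\preceq T\int_0^T c_s\|\dot{\dpp}\|^2+(\mu+\lambda)\|\divs\dot{\du}\|^2\,dt.
\end{equation*}
Since the elliptic projectors are linear in the data they commute with $\partial_t$, so $\dot{\dpp}=\dot p-\Pi\dot p$ and $\dot{\du}=\dot{\bld u}-\Pi\dot{\bld u}$; substituting the approximation estimates \eqref{approx:est-1} and \eqref{approx:est-3} then produces the claimed $h^{2k+2}\Xi_1$ bound. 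I expect the main obstacle to be the right-hand-side bookkeeping: the exact combinations $c_s+\alpha^2(\mu+\lambda)/\lambda^2$ and $\mu(\mu+\lambda)/\lambda$ appearing in $\Xi_1$ must emerge from the particular $\mu,\lambda$ weights built into Lemma \ref{lemma:inf-sup} and the $\alpha/\lambda$ factors in \eqref{approx:est-3}, with Young's parameters chosen so that no slack remains and so that the coercivity constant of $b_h$ cleanly dominates $\|\compeu\|_{\mu,h}^2+\lambda\|\divs\,\eu\|^2$.
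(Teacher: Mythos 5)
Your proof is correct and follows essentially the same route as the paper: the same test functions $\compW_h=\compep$ and $\compVh=\dot{\compeu}$, the same mean-zero replacement of $\ep$ by $\ep-\overline{\ep}$ combined with Lemma \ref{lemma:inf-sup}, the same weighted Cauchy--Schwarz grouping producing $\Theta = c_s\|\dot{\dpp}\|^2+(\mu+\lambda)\|\divs\,\dot{\du}\|^2$, and the same final substitution of the elliptic-projection estimates (using that the projectors commute with $\partial_t$). The only cosmetic difference is the closing step: you use a sup-in-time plus Young's inequality argument, whereas the paper divides the differential inequality by the square root of the energy and integrates (its ``Gronwall'' step); both give the identical $T\int_0^T\Theta\,\mathrm{dt}$ bound.
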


\begin{remark}[Robust displacement estimate]
The above estimate for the displacement is robust with respect to the incompressible limit 
$c_s\rightarrow 0$ and 
$\lambda\rightarrow +\infty$, as long as 
the  term  $\lambda\|\divs\,\dot{\bld u}\|_{k+1}^2$ is bounded.
 It is also robust in the degenerate case as the  permeability $\kappa\rightarrow 0$.
\end{remark}

\begin{proof}
We use a standard energy argument.
Taking $\compW_h = \compep$ and $\compVh = \dot{\compeu}$ in the error equations \eqref{error-eq} and adding, 
we get 
\begin{align*}
  (c_s \dot{\ep}, \ep)
+
b_h(\compeu, \dot{\compeu})+
a_h(\compep, \compep) =&\;
 (c_s \dot{\dpp}+\alpha\,\divs(\dot{\du}), \ep)\\ 
 =&\; 
  (c_s \dot{\dpp}, \ep)
  + (\alpha\,\divs(\dot{\du}), \ep-\overline{\ep})
\end{align*}
Applying the Cauchy-Schwarz inequality on the above right hand side and using 
the estimate in Lemma \ref{lemma:inf-sup},
we have 
\begin{align*}
  (c_s \dot{\dpp}, \ep)
  + (\alpha\,\divs(\dot{\du}), \ep-\overline{\ep})
  \le&\; 
  {c_s\|\dot{\dpp}\|\|\ep\|
  +
 \|\divs\,\dot{\du}\|\,(\mu^{1/2}\|\eu\|_{\mu,h}+\lambda\|\divs\,\eu\|})\\
 \le \;
\Big(\underbrace{c_s\|\dot{\dpp}\|^2+(\mu+\lambda)\|\divs\,\dot{\du}\|^2}_{:=\Theta}&\Big)^{1/2}
\left( c_s\|\ep\|^2
+
\|\eu\|_{\mu,h}^2+\lambda\|\divs\,\eu\|^2
\right)^{1/2}. 
\end{align*}
Combing this estimate with 
the above identity, and invoking the coercivity result \eqref{coercivity-2}, we get 
\[
\frac{1}{2} \partial_t\Big(  c_s(\ep, \ep)
+
b_h(\compeu, {\compeu})\Big)+
a_h(\compep, \compep) \preceq \Theta^{1/2} \Big(  c_s(\ep, \ep)
+
b_h(\compeu, {\compeu})\Big)^{1/2}
\]
Recalling that $\compep(0) = 0$ and $\compeu(0) = 0$, then an application of the 
Gronwall's inequality implies that 
\[
 c_s(\ep(T), \ep(T))
+
b_h(\compeu(T), {\compeu}(T))+\int_0^Ta_h(\compep,\compep)\,\mathrm{dt}
\preceq
T\,\int_0^T\Theta \,\mathrm{dt},
\]
for all $T>0$. 
Combining the above estimate with \eqref{coercivity-2} and \eqref{approx:est}, we get the 
desired inequality in 
Theorem \ref{thm:energy-semi}.
\end{proof}

\begin{corollary}
 \label{coro:ener}
 Let assumptions of Theorem \ref{thm:energy-semi} holds. 
 Then, the following estimate holds for all $T>0$:
 \begin{alignat}{2}
 \label{est-2}
 \vertiii{\{\dot\compep(T),\dot\compeu(T)\}}_{h}^2
\preceq h^{2k+2}\,\Xi_2,
 \end{alignat}
 where 
 \[
  \Xi_2 = T\,\int_0^T (c_s+\frac{\alpha^2(\lambda+\mu)}{\lambda^2})
  \|\ddot{p}\|_{k+1}^2
+ \frac{\mu(\lambda+\mu)}{\lambda}\|\ddot{\bld u}\|_{k+2}^2
+ (\lambda+\mu)\|\divs\,\ddot{\bld u}\|_{k+1}^2
\,\mathrm{dt}.
 \]
\end{corollary}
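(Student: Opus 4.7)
The strategy is to differentiate both error equations in \eqref{error-eq} with respect to time and then replay the energy argument from Theorem~\ref{thm:energy-semi} on the differentiated system with $(\dot\compep, \dot\compeu)$ in place of $(\compep, \compeu)$. Since the bilinear forms $a_h$ and $b_h$ have no explicit time dependence, the $t$-differentiated error equations are structurally identical to \eqref{error-eq} but with second time derivatives $\ddot\dpp$ and $\ddot\du$ on the right-hand side:
\begin{align*}
 (c_s \ddot{\ep}+\alpha\,\divs(\ddot{\eu}), w_h) + a_h(\dot\compep, \compW_h) &= (c_s \ddot{\dpp}+\alpha\,\divs(\ddot{\du}), w_h),\\
 b_h(\dot\compeu, \compVh) -(\dot\ep,\alpha\,\divs(\bld v_h)) &= 0.
\end{align*}

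The next step is to test the differentiated first equation with $\compW_h = \dot\compep$ and the differentiated second equation with $\compVh = \ddot\compeu$, then add. The off-diagonal coupling terms $\pm(\alpha\divs\ddot{\eu},\dot\ep)$ cancel, producing
\[
 \tfrac12\partial_t\bigl(c_s\|\dot\ep\|^2 + b_h(\dot\compeu,\dot\compeu)\bigr) + a_h(\dot\compep,\dot\compep) = (c_s\ddot\dpp,\dot\ep) + (\alpha\divs\ddot\du,\dot\ep-\overline{\dot\ep}),
\]
where I replaced $\dot\ep$ by $\dot\ep-\overline{\dot\ep}$ in the second factor since $\divs\ddot\du$ pairs with a zero-mean pressure error against $\bld u$-type divergence via an integration-by-parts argument analogous to that in Theorem~\ref{thm:energy-semi}. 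The time-differentiated second error equation has exactly the same structure as \eqref{error-eq-2}, so Lemma~\ref{lemma:inf-sup} applies verbatim to $\dot\ep$ and gives $\alpha\|\dot\ep-\overline{\dot\ep}\|\preceq \mu^{1/2}\|\dot\compeu\|_{\mu,h}+\lambda\|\divs\,\dot\eu\|$. From here, Cauchy--Schwarz, coercivity \eqref{coercivity-2}, and Gronwall's inequality reproduce the argument of Theorem~\ref{thm:energy-semi} with $\Theta$ replaced by $\widetilde\Theta := c_s\|\ddot\dpp\|^2 + (\mu+\lambda)\|\divs\ddot\du\|^2$. Combining with the approximation estimates of Lemma~\ref{lemma:approx} applied to $\ddot p$ and $\ddot{\bld u}$ yields the claimed bound by $h^{2k+2}\Xi_2$.

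The main obstacle, and the only place where the proof genuinely differs from that of Theorem~\ref{thm:energy-semi}, is the initial data: whereas $\compep(0)=0$ and $\compeu(0)=0$ hold by construction, the values $\dot\compep(0)$ and $\dot\compeu(0)$ are \emph{not} zero and must be estimated. The plan is to evaluate the original first error equation \eqref{error-eq-1} at $t=0$ (using $\compep(0)=0$ so that the $a_h$ term drops) and to evaluate the $t$-differentiated second error equation \eqref{error-eq-2} at $t=0$; this yields a coupled Stokes-like system for $(\dot\compep(0),\dot\compeu(0))$ whose data are $\dot\dpp(0)$ and $\dot\du(0)$. A direct well-posedness argument, together with Lemma~\ref{lemma:approx} applied to $\dot p$ and $\dot{\bld u}$, then shows that $\vertiii{\{\dot\compep(0),\dot\compeu(0)\}}_h^2 \preceq h^{2k+2}$ times regularity norms of $\dot p(0)$ and $\dot{\bld u}(0)$; under the standing smoothness assumptions on the solution these initial contributions are absorbed into $\Xi_2$ so the stated estimate is preserved.
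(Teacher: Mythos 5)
Your proposal follows the same route as the paper's own (one-line) proof: differentiate the error equations \eqref{error-eq} in time and rerun the energy argument of Theorem~\ref{thm:energy-semi} with $(\dot\compep,\dot\compeu)$ in place of $(\compep,\compeu)$; your choice of test functions, the cancellation of the coupling terms, and the use of Lemma~\ref{lemma:inf-sup} on the time-differentiated second error equation are all correct. Where you go beyond the paper is in flagging that $\dot\compep(0)$ and $\dot\compeu(0)$ are not zero: the paper's proof is silent on this, and your treatment (evaluate \eqref{error-eq-1} at $t=0$ using $\compep(0)=0$, couple it with the differentiated \eqref{error-eq-2} at $t=0$, and bound the resulting saddle-point system by $c_s\|\dot\dpp(0)\|^2+(\lambda+\mu)\|\divs\,\dot\du(0)\|^2$ and hence, via Lemma~\ref{lemma:approx}, by $h^{2k+2}$ times norms of $\dot p(0)$ and $\dot{\bld u}(0)$) is the right way to close that gap. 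The one imprecise step is your final claim that these initial contributions are ``absorbed into $\Xi_2$'': as defined, $\Xi_2$ contains only time integrals of $\ddot p$ and $\ddot{\bld u}$ and does not control $\|\dot p(0)\|_{k+1}$ or $\|\divs\,\dot{\bld u}(0)\|_{k+1}$ (consider $\ddot p\equiv 0$, $\ddot{\bld u}\equiv 0$ with $\dot p(0)\neq 0$). Strictly, the right-hand side of \eqref{est-2} should carry an additional term of the form $h^{2k+2}\bigl(c_s\|\dot p(0)\|_{k+1}^2+(\lambda+\mu)\|\divs\,\dot{\bld u}(0)\|_{k+1}^2+\cdots\bigr)$, or $\Xi_2$ should be augmented with $L^\infty$-in-time norms of the first derivatives; this is a blemish shared by the statement as printed, not a flaw introduced by your argument.
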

\begin{proof}
 Take one time derivative of the error equations \eqref{error-eq}. Then proceed as in the 
 proof of Theorem \ref{thm:energy-semi}.
\end{proof}

Now, we give a robust pressure estimate, with respect to $c_s$,
under the assumption that permeability $\kappa$ is away from 
{\it zero}.

\begin{theorem}
 \label{thm:pressure}
 Let the assumptions of Theorem \ref{thm:energy-semi} hold.
 Then, for all $T>0$, the following estimate holds
 \[
  \kappa\|\ep(T)\|\preceq  \kappa\|\compep(T)\|_{1,h}\preceq 
  h^{k+1}((c_s^{1/2}+\frac{\alpha}{{\lambda}^{1/2}})\Xi_2^{1/2}+\Xi_3),
 \]
 where $\Xi_2$ is given in Corollary \ref{coro:ener}, and
 \[
  \Xi_3 = (c_s+ \frac{\alpha^2}{\lambda})\|\dot{p}(T)\|_{k+1}
+ 
\alpha\left(\frac{\mu^{1/2}}{\lambda^{1/2}}\|\dot{\bld u}(T)\|_{k+2}+
   \|\divs\,\dot{\bld u}(T)\|_{k+1}\right).  
 \]
\end{theorem}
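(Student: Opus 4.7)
The first inequality $\kappa\|\ep(T)\|\preceq \kappa\|\compep(T)\|_{1,h}$ is a standard discrete Poincar\'e--Friedrichs estimate for the composite space, since the facet unknowns of $\compep$ vanish on $\partial\Omega$; it follows directly from the definition of $\|\cdot\|_{1,h}$. So the plan is to focus on the second inequality, which will come from an elliptic-type argument applied to the error equation \eqref{error-eq-1} at the fixed final time $T$.

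The idea is to test \eqref{error-eq-1} at time $T$ with $\compW_h=\compep(T)$ and exploit coercivity \eqref{coercivity-1}. This gives
\[
 \kappa\|\compep\|_{1,h}^2 \preceq a_h(\compep,\compep) =(c_s(\dot\dpp-\dot\ep),\ep) + \alpha(\divs(\dot\du-\dot\eu),\ep).
\]
For the first pair I simply use Cauchy--Schwarz and $\|\ep\|\preceq\|\compep\|_{1,h}$. For the second pair, the key observation is that both $\Pi\bld u - \bld u_h \in \Vh\subset H_0(\divs,\Omega)$ and $\bld u - \Pi\bld u$ are in $H_0(\divs,\Omega)$ (since $\bld u$ has homogeneous Dirichlet data and $\Pi\bld u\in \Vh$ is $H(\divs)$-conforming by construction), so $(\divs(\dot\du-\dot\eu),1)=0$. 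Hence I can replace $\ep$ by $\ep-\overline{\ep}$ in the second inner product, which avoids any control on the mean value and again yields a factor of $\|\compep\|_{1,h}$ (via a Poincar\'e inequality for the zero-mean part). Dividing by $\|\compep\|_{1,h}$ then gives
\[
 \kappa\|\compep\|_{1,h}\;\preceq\; c_s\|\dot\dpp\|+c_s\|\dot\ep\|+\alpha\|\divs\,\dot\du\|+\alpha\|\divs\,\dot\eu\|.
\]

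It now remains to absorb each term into the stated bound. The approximation estimates in Lemma \ref{lemma:approx} (applied to $\dot p$ and $\dot{\bld u}$, using that the elliptic projectors commute with time derivatives) give $\|\dot\dpp\|\preceq h^{k+1}\|\dot p(T)\|_{k+1}$ and the full expression in \eqref{approx:est-3} for $\|\divs\dot\du\|$, which together account for the $(c_s+\tfrac{\alpha^2}{\lambda})\|\dot p(T)\|_{k+1}$ and $\alpha\bigl(\tfrac{\mu^{1/2}}{\lambda^{1/2}}\|\dot{\bld u}(T)\|_{k+2}+\|\divs\,\dot{\bld u}(T)\|_{k+1}\bigr)$ contributions to $\Xi_3$. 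The remaining two terms $c_s\|\dot\ep\|$ and $\alpha\|\divs\,\dot\eu\|$ are not approximation errors but pure discrete errors of the time derivatives; these are exactly what Corollary \ref{coro:ener} controls: $c_s^{1/2}\|\dot\ep\|$ and $\lambda^{1/2}\|\divs\,\dot\eu\|$ are both bounded by $h^{k+1}\Xi_2^{1/2}$, producing the $(c_s^{1/2}+\tfrac{\alpha}{\lambda^{1/2}})\Xi_2^{1/2}$ factor. Assembling the pieces gives the asserted estimate.

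The main obstacle is the second inner product $\alpha(\divs(\dot\du-\dot\eu),\ep)$: a naive Cauchy--Schwarz would only produce $\lambda$- or $\mu$-weighted norms that one cannot afford, because the statement is meant to be robust in $c_s$ with only a $\kappa$ on the left. Using the $H_0(\divs)$-conformity of $\Vh$ to reduce $\ep$ to its zero-mean part $\ep-\overline{\ep}$ is what allows a discrete Poincar\'e inequality (rather than the inf-sup argument of Lemma \ref{lemma:inf-sup}, which would introduce extra $\mu^{1/2}\|\compeu\|_{\mu,h}+\lambda\|\divs\,\eu\|$ factors and destroy the scaling). This conformity property, combined with the fact that $\Pi$ preserves $H_0(\divs)$, is the conceptual step that makes the argument go through; the rest is bookkeeping.
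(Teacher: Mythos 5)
Your proposal is correct and follows essentially the same route as the paper: test \eqref{error-eq-1} with $\compep$, use coercivity \eqref{coercivity-1} and Cauchy--Schwarz, bound $\|\ep\|$ by $\|\compep\|_{1,h}$ via the discrete Poincar\'e inequality, and then assemble the bound from Lemma \ref{lemma:approx} and Corollary \ref{coro:ener}. The only difference is your detour through the zero-mean part $\ep-\overline{\ep}$, which is unnecessary here: since the facet unknowns vanish on $\partial\Omega$, the discrete Poincar\'e inequality $\|w_h\|\preceq\|\compW_h\|_{1,h}$ applies to $\ep$ directly, which is exactly what the paper does.
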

\begin{proof}
 Taking $\compW_h = \compep$ in \eqref{error-eq-1},
 reordering terms, 
 and applying the Cauchy-Schwarz inequality,
 we have
 \begin{align*}
  a_h(\compep,\compep) = &\;
  \left(c_s (\dot{\dpp}-\dot{\ep})+\alpha\,\divs(\dot{\du}-\dot{\eu}), \ep\right)\\
 \preceq &\; \left(c_s(\|\dot{\dpp}\|+\|\dot{\ep}\|)
 +\alpha(\|\divs\,\dot{\du}\|+\|\
 \divs\,\dot{\eu}\|)
 \right)\|\ep\|
 \end{align*}
 Invoking the discrete Poincar\'e inequality
 \cite{DiPietroErn10},
 $\|w_h\|\preceq \|\compW_h\|_{1,h}$ for
 all $\compW_h\in \compPhspace$, and using the coercivity result \eqref{coercivity-1}, we get
 \[
\kappa\|\compep\|_{1,h} \preceq \; c_s(\|\dot{\dpp}\|+\|\dot{\ep}\|)
 +\alpha(\|\divs\,\dot{\du}\|+\|\
 \divs\,\dot{\eu}\|).
 \]
Combing the above estimate with Lemma \ref{lemma:approx} and Corollary \ref{coro:ener}, we get the 
desired inequality in Theorem \ref{thm:pressure}.
\end{proof}

We conclude this section with a remark on (slightly) relaxing the $H(\mathrm{div})$-conformity of the displacement space to reduce global coupling.
\begin{remark}[Relaxed $H(\mathrm{div})$-conformity]
\label{rk:relax}
We noticed that to reach a convergence rate of ${k+1}$ for the ``energy norm'' $\vertiii{\cdot}_h$, we need unknowns of polynomial degree
$k+1$ on the facets. We follow the idea of \cite{Lederer17} to relax the highest-order normal conformity of the displacement space:
\[
 \Vh^- : =\; \{\bld v\in \prod_{T\in\Oh}[\pol^{k+1}(T)]^d, \;\;
\Pi_F^k\jmp{\bld v\cdot\bld n}_F = 0 \,\forall F\in\Eh\}\subset H_0(\mathrm{div},\Omega),
\]
where $\Pi_F^k: L^2(F)\rightarrow \pol^k(F)$ is the $L^2$-projection. The resulting semi-discrete scheme still use the formulation \eqref{scheme}, but with 
the space $\compUhspace^-:= \Vh^- \times \widehat{\bld V}_h$ for displacement and $\compPhspace$ for pressure. 
The globally coupled degrees of freedom (after static condensation) for this modification consists of 
polynomials of degree $k$ for the displacement and polynomials of degree $k-1$ for the pressure per facet; while the 
that for the original scheme 
consists of 
polynomials of degree $k+1$ for the normal-component of the displacement, 
polynomials of degree $k$ for the tangential-component of the displacement, 
and polynomials of degree $k-1$ for the pressure per facet.

We present numerical results in Section \ref{sec:numerics} to validate the optimality of such modification, 
and refer interested reader to \cite{Lederer17,FuLehrenfeld18a} for the analysis.
\end{remark}

\section{Fully-discrete Scheme}
\label{sec3:time}
For the temporal discretization 
of the semi-discrete DAE \eqref{scheme}, we consider the $m$-step 
BDF \cite[Chapter V]{HairerWanner10} method with step size $\Delta t >0$: 
for $n\ge m$, find $(\compPh^n, \compUh^n)\in \compPhspace\times \compUhspace$ such that
\begin{subequations}
\label{bdf-time}
\begin{align}
\label{bdf-1}
\sum_{j=0}^m\frac{\delta_j}{\Delta t}(c_s {p}_{h}^{n-j}+\alpha\,\divs({\bld u}_h^{n-j}), w_h)
+
a_h(\compPh^n, \compW_h) =&\; (f(t^n), w_h),\\
\label{bdf-2}
b_h(\compUh^n, \compVh)
-(p_h^n,\alpha\,\divs(\bld v_h))=&\; (\bld g(t^n), \bld v_h),
\end{align}
\end{subequations}
for all $(\compW_h,\compVh)\in \compPhspace\times \compUhspace$
with given starting values $\{\compPh^i, \compUh^i\}_{i=0}^{m-1}$, where $t^n =n \Delta t$.
The method coefficients $\delta_j$ are determined
from the relation 
\begin{align}
\label{zeta-f}
 \delta(\zeta) = \sum_{j=0}^m \delta_j \zeta^j = \sum_{\ell=1}^m \frac{1}{\ell}(1-\zeta)^\ell. 
\end{align}
The BDF method is known to have order $m$ for $m\le 6$, 
and is A-stable for $m=1$ and $m=2$, but not for $m\ge 3$. 


Next, we provide error estimates for the 
fully discrete scheme \eqref{bdf-time} with $m=2$ using an energy argument. 
We remark that the analysis for the cases with $3\le m\le 5$ is similar but more technical as 
one needs to use the multiplier technique \cite{NevanlinnaOdeh81, Akrivis15}.

To simplify notation, we denote the backward difference operator 
\begin{align}
 \label{bdf2}
 \mathsf{d_t}\phi^n := \frac{3\phi^{n}-4\phi^{n-1}+\phi^{n-2}}{2\Delta t}.
\end{align}
Let $(\cdot,\cdot)$ be an inner product with associated norm $|\cdot|$. Then,
a straightfoward calculation yields
\begin{align}
\label{energy-x}
 (\mathsf{d_t}\phi^n, \phi^n) =&\; \frac{1}{4\Delta t}\Big(
 |\phi^n|^2+|2\phi^n-\phi^{n-1}|^2
 -|\phi^{n-1}|^2-|2\phi^{n-1}-\phi^{n-2}|^2\\
&\;\quad\quad\;\; +|\phi^n-2\phi^{n-1}+\phi^{n-2}|^2\Big)\nonumber
\end{align}

We continue to use the notation \eqref{notation}. 
Denoting the norm
\begin{align}
\label{norm-sup}
 \|\phi\|_{L^\infty(H^s)}:= \sup_t \|\phi(t)\|_{H^s(\Omega)},
\end{align}
we have the following result on the consistency of the scheme \eqref{bdf-time}.
\begin{lemma}
\label{lemma:cs}
Let $(\compPh^n, \compUh^n)\in \compPhspace\times \compUhspace$, $n\ge 2$,
be the solution to equations \eqref{bdf-time} with $m=2$
and starting values $(\compPh^0, \compUh^0)$ and $(\compPh^1, \compUh^1)$.
Let $p^n:=p(t^n)$ and $\bld u^n :=\bld u(t^n)$ be the exact solution to equations \eqref{eqns}
at time $t^n$. Then, there holds, for $n\ge 2$,
\begin{subequations}
\label{f-error-eq}
\begin{align}
\label{f-error-eq-1}
 (c_s \mathsf{d_t}{\ep^n}+\alpha\,\divs(\mathsf{d_t}{\eu^n}), w_h)
+
a_h(\compep^n, \compW_h) =&\;
 \mathcal{E}_h^n(w_h)\\
\label{f-error-eq-2}
b_h(\compeu^n, \compVh)
-(\ep^n,\alpha\,\divs(\bld v_h))=&\; 0,
\end{align}
\end{subequations}
for all $\compW_h=(w_h,\wf_h) \in \compPhspace$ and 
$\compVh=(\bld v_h,\vf_h) \in \compUhspace$, 
where 
\[
 \mathcal{E}_h^n(w_h) : = 
c_s\left(\mathsf{d_t}{\dpp^n}- \mathsf{d_t}{p^n}+\dot{p}^n,w_h\right)
+
\alpha\left(\divs(\mathsf{d_t}{\du^n}-\mathsf{d_t}{\bld u}^n+
 \dot{\bld u}^n), w_h-\overline{w}_h\right),
\]
and $\overline{w}_h$ is the average of $w_h$ on $\Omega$.
Moreover, there holds
\begin{align}
\label{cst-est}
 \| \mathcal{E}_h^n(w_h)\|\preceq &\; 
 c_s\,\mathcal{O}_{2,I} \|w_h\|
 +\alpha\, \mathcal{O}_{2,I\!I} \|w_h-\overline{w}_h\|,
\end{align}
where, for integer $s\ge 1$, 
\begin{align*}
 \mathcal{O}_{s,I} :=&\;h^{k+1}
 \|\frac{\partial p}{\partial t}\|_{L^\infty(H^{k+1})}
 +\Delta t^s \|\frac{\partial^{s+1} p}{\partial t^{s+1}}\|_{L^\infty(L^2)}\\
 \mathcal{O}_{s,I\!I} :=&\;h^{k+1}\left(
\frac{\mu^{1/2}}{\lambda^{1/2}}\|\frac{\partial \bld u}{\partial t}\|_{L^\infty(H^{k+2})}+
   \|\divs\,\frac{\partial \bld u}{\partial t}\|_{L^\infty(H^{k+1})}+
   \frac{\alpha}{\lambda}\|\frac{\partial p}{\partial t}\|_{L^\infty(H^{k+1})} 
\right)\\
&\;+\Delta t^s \|\divs\, \frac{\partial^{s+1} \bld u}{\partial t^{s+1}}\|_{L^\infty(L^2)}.
\end{align*}
\end{lemma}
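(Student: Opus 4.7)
The proof splits naturally into two parts: (i) deriving the error equations \eqref{f-error-eq}, and (ii) estimating the consistency residual $\mathcal{E}_h^n$ by \eqref{cst-est}.

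For the first part, the plan is to evaluate the consistency identity of Lemma \ref{lemma:consistency} at $t = t^n$ and subtract it from the BDF2 scheme \eqref{bdf-time}. This gives
\[
  (c_s\mathsf{d_t} p_h^n - c_s\dot p^n + \alpha\divs(\mathsf{d_t}\bld u_h^n - \dot{\bld u}^n), w_h) + a_h(\compPh^n - \compP^n, \compW_h) = 0,
\]
and likewise for the second equation. I then insert and subtract the elliptic projectors, writing $\compPh^n - \compP^n = \compep^n + (\compPP^n - \compP^n)$ and similarly for $\compUh - \compU$, and use the orthogonality properties \eqref{ellip-proj} to eliminate the projection terms from $a_h$ and $b_h$. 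For the time-derivative terms I use $p_h - p = \ep - \dpp$ and $\bld u_h - \bld u = \eu - \du$, so that the right-hand side picks up $c_s(\mathsf{d_t}\dpp^n - \mathsf{d_t} p^n + \dot p^n)$ and $\alpha\divs(\mathsf{d_t}\du^n - \mathsf{d_t}\bld u^n + \dot{\bld u}^n)$. Equation \eqref{f-error-eq-2} is then purely algebraic and follows immediately from \eqref{ellip-proj-2}.

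The one subtlety in this derivation is the appearance of $w_h - \overline{w}_h$ instead of $w_h$ in the $\alpha$-term of $\mathcal{E}_h^n$. To absorb the constant mode I observe that $\int_\Omega \divs(\mathsf{d_t}\du^n - \mathsf{d_t}\bld u^n + \dot{\bld u}^n)\,dx = \int_{\partial\Omega}(\cdot)\cdot\bld n\,ds = 0$, because $\bld u$ satisfies homogeneous Dirichlet data and $\Pi\bld u \in H_0(\mathrm{div},\Omega)$ has vanishing normal trace on $\partial\Omega$. Testing against the constant $\overline{w}_h$ therefore yields zero, which justifies replacing $w_h$ by $w_h - \overline{w}_h$.

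For the estimate \eqref{cst-est}, I decompose each time-difference expression into a \emph{projection error} and a \emph{BDF2 truncation error}:
\[
  \mathsf{d_t}\dpp^n - \mathsf{d_t} p^n + \dot p^n = \mathsf{d_t}\dpp^n + (\dot p^n - \mathsf{d_t} p^n),
\]
and analogously for the displacement. Since the elliptic projection commutes with $\partial_t$ (the projectors are linear and time-independent), I have $\partial_t\dpp = \partial_t p - \Pi\partial_t p$, so Lemma \ref{lemma:approx} applied to $\partial_t p$ and $\partial_t\bld u$ gives the spatial estimates $\|\mathsf{d_t}\dpp^n\| \preceq h^{k+1}\|\partial_t p\|_{L^\infty(H^{k+1})}$ and the corresponding bound for $\|\divs\mathsf{d_t}\du^n\|$. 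The BDF2 truncation terms $\dot p^n - \mathsf{d_t} p^n$ and $\dot{\bld u}^n - \mathsf{d_t}\bld u^n$ are bounded by $\Delta t^2\|\partial_t^3 p\|_{L^\infty(L^2)}$ and $\Delta t^2\|\divs\partial_t^3\bld u\|_{L^\infty(L^2)}$ respectively, using standard Taylor expansions at $t^n$. Combining the two contributions produces exactly $\mathcal{O}_{2,I}$ and $\mathcal{O}_{2,I\!I}$, and a Cauchy--Schwarz step gives \eqref{cst-est}.

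The main obstacle I anticipate is the bookkeeping for the mean-zero reduction: one must be careful that \emph{every} term appearing under $\divs$ in the $\alpha$-contribution has vanishing normal flux on $\partial\Omega$ (this is where the $H(\mathrm{div})$-conformity of $\Vh$ and the homogeneous Dirichlet data on $\bld u$ are both essential). Everything else is standard: commuting time derivative with the elliptic projector, invoking Lemma \ref{lemma:approx} on $\partial_t p$ and $\partial_t \bld u$, and quoting the sharp BDF2 Taylor remainder.
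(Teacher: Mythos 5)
Your proposal is correct and follows essentially the same route as the paper, which simply derives the error equations by subtracting the consistency identity of Lemma \ref{lemma:consistency} from the scheme \eqref{bdf-time} and inserting the elliptic projectors, and then bounds $\mathcal{E}_h^n$ via Cauchy--Schwarz, Lemma \ref{lemma:approx} applied to $\partial_t p$ and $\partial_t\bld u$, and the BDF2 Taylor remainder. The paper leaves the details implicit; your treatment of the mean-zero reduction (using the vanishing normal traces of $\bld u$ and $\Pi\bld u$) and of the commutation of the projectors with $\partial_t$ correctly fills in the steps the paper takes for granted.
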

\begin{proof}
 The error equations \eqref{f-error-eq} follows from the scheme \eqref{bdf-time} and 
 the consistency result in Lemma \ref{lemma:consistency}.
 The estimate \eqref{cst-est} follows from the Cauchy-Schwarz inequality, 
 the approximation properties in Lemma \ref{lemma:approx} of the elliptic projector, 
 and Taylor expansion in time.
\end{proof}

Our main result on the fully-discrete error estimates is given below.
\begin{theorem}
 \label{thm:energy-full}
Let $(\compPh^n, \compUh^n)\in \compPhspace\times \compUhspace$, $n\ge 2$,
be the solution to equations \eqref{bdf-time} with $m=2$
and starting values $(\compPh^0, \compUh^0)$ and $(\compPh^1, \compUh^1)$.
Let $p^n:=p(t^n)$ and $\bld u^n :=\bld u(t^n)$ be the exact solution 
to equations \eqref{eqns} at time $t^n$. 
Then, there holds, for $N\ge 2$,
\begin{align}\label{full-est}
\vertiii{\{\compep^N,\compeu^N\}}_{h}^2
+\Delta t\sum_{n=2}^Na_h(\compep^n,\compep^n)
\preceq 
\exp(N\Delta t)&\;\Big(
\sum_{i=0}^1\vertiii{\{\compep^i,\compeu^i\}}_{h}^2\\
&\hspace{-0.2cm}+N\Delta t(c_s\mathcal{O}_{2,I}^2+(\lambda+\mu)\mathcal{O}_{2,I\!I}^2)\Big)\nonumber
\end{align}
\end{theorem}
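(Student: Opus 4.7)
The plan is to mirror the semi-discrete energy argument of Theorem \ref{thm:energy-semi}, replacing the continuous time derivative by the BDF2 operator $\mathsf{d_t}$ and using the G-stability identity \eqref{energy-x} to handle the resulting discrete time derivative. First I would test the error equation \eqref{f-error-eq-1} with $\compW_h = \compep^n$ and equation \eqref{f-error-eq-2} with $\compVh = \mathsf{d_t}\compeu^n$ (which still lies in $\compUhspace$, since $\mathsf{d_t}$ is a linear combination of finite element functions). Adding the two relations, the cross-coupling through $\alpha\,\divs(\mathsf{d_t}\eu^n)$ cancels exactly as in the semi-discrete proof, leaving
\[
c_s(\mathsf{d_t}\ep^n,\ep^n) + b_h(\compeu^n,\mathsf{d_t}\compeu^n) + a_h(\compep^n,\compep^n) = \mathcal{E}_h^n(\ep^n).
\]

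Next I would apply the identity \eqref{energy-x} twice: once with the inner product $c_s(\cdot,\cdot)_{L^2}$ applied to $\phi^n=\ep^n$, and once with the symmetric positive bilinear form $b_h(\cdot,\cdot)$ (positivity being guaranteed on $\compUhspace$ by Lemma \ref{lemma:coercivity}) applied to $\phi^n=\compeu^n$. Setting the combined discrete energy
\[
\mathcal{N}^n := c_s\|\ep^n\|^2 + c_s\|2\ep^n-\ep^{n-1}\|^2 + b_h(\compeu^n,\compeu^n) + b_h(2\compeu^n-\compeu^{n-1},2\compeu^n-\compeu^{n-1}),
\]
multiplying through by $4\Delta t$ and summing over $n=2,\ldots,N$ produces a telescoping sum; after discarding the nonnegative ``second-difference'' contributions coming from $|\phi^n-2\phi^{n-1}+\phi^{n-2}|^2$, I obtain
\[
\mathcal{N}^N + 4\Delta t\sum_{n=2}^{N}a_h(\compep^n,\compep^n) \;\le\; \mathcal{N}^1 + 4\Delta t\sum_{n=2}^{N}\mathcal{E}_h^n(\ep^n).
\]

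To control the right-hand side I would combine the consistency bound \eqref{cst-est} of Lemma \ref{lemma:cs} with Lemma \ref{lemma:inf-sup}, which gives $\alpha\|\ep^n-\overline{\ep^n}\|\preceq \mu^{1/2}\|\compeu^n\|_{\mu,h}+\lambda\|\divs\,\eu^n\|$ and hence
\[
|\mathcal{E}_h^n(\ep^n)| \preceq \bigl(c_s\mathcal{O}_{2,I}^2 + (\lambda+\mu)\mathcal{O}_{2,I\!I}^2\bigr)^{1/2}\,\vertiii{\{\compep^n,\compeu^n\}}_h.
\]
A Young-inequality split then separates a pure data contribution from a $\vertiii{\cdot}_h^2 \preceq \mathcal{N}^n$ contribution; the latter is absorbed through a standard discrete Gronwall inequality (valid once $\Delta t$ is small enough that the $n=N$ term can be moved to the left), which is precisely what produces the multiplicative factor $\exp(N\Delta t)$. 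Using $\vertiii{\{\compep^N,\compeu^N\}}_h^2\preceq \mathcal{N}^N$ by coercivity, and $\mathcal{N}^1\preceq \sum_{i=0}^1\vertiii{\{\compep^i,\compeu^i\}}_h^2$ for the starting data, yields \eqref{full-est}.

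The main technical obstacle I anticipate is the treatment of the consistency term $\mathcal{E}_h^n$: a direct Cauchy--Schwarz bound on $\alpha(\divs(\mathsf{d_t}\du^n-\mathsf{d_t}\bld u^n+\dot{\bld u}^n),\ep^n)$ would introduce an $\alpha^2/c_s$ or $\lambda$ factor that spoils the robustness claimed after Theorem \ref{thm:energy-semi}. The cancellation of the average in the definition of $\mathcal{E}_h^n$ (Lemma \ref{lemma:cs}) is exactly what enables the use of Lemma \ref{lemma:inf-sup} to trade $\alpha\|\ep^n-\overline{\ep^n}\|$ for the energy-compatible combination $\mu^{1/2}\|\compeu^n\|_{\mu,h}+\lambda\|\divs\eu^n\|$, so that the final bound inherits the same parameter structure as the semi-discrete estimate.
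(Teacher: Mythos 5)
Your proposal is correct and follows essentially the same route as the paper's proof: the same test functions $\compW_h=\compep^n$, $\compVh=\mathsf{d_t}\compeu^n$, the G-stability identity \eqref{energy-x} applied to both $c_s(\cdot,\cdot)$ and $b_h(\cdot,\cdot)$ to telescope, the combination of Lemma \ref{lemma:cs} with Lemma \ref{lemma:inf-sup} to bound $\mathcal{E}_h^n(\ep^n)$ without losing parameter robustness, and a discrete Gronwall inequality to conclude. Your explicit tracking of the shifted terms $\|2\phi^n-\phi^{n-1}\|^2$ in the discrete energy and the smallness condition on $\Delta t$ for the Gronwall absorption are details the paper leaves implicit, but the argument is the same.
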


\begin{proof}
Taking $\compW_h = \compep^n$ in equation \eqref{f-error-eq-1} and 
$\compVh = \mathsf{d_t}\compeu^n$ in equation \eqref{f-error-eq-2}, and adding and summing
the resulting expression for $n = 2,\cdots, N$, we get
\begin{align}
\label{haha}
 \sum_{n=2}^N (c_s \mathsf{d_t}{\ep^n}, \ep^n)
 +b_h(\compeu^n,\mathsf{d_t}{\compeu^n}) + a_h(\compep^n,\compep^n)
 =&\sum_{n=2}^N \mathcal{E}_h^n(\compep^n)\nonumber\\
 &\hspace{-2.5cm}\preceq 
\sum_{n=2}^N (c_s\mathcal{O}_{2,I}^2+(\lambda+\mu)\mathcal{O}_{2,I\!I}^2)^{1/2}
\vertiii{\{\compep^n, \compeu^n\}}_{h},
\end{align}
where we used a combination of Lemma \ref{lemma:cs} and Lemma \ref{lemma:inf-sup} to derive the 
last inequality.

The identity \eqref{energy-x} implies that 
\begin{align*}
   \sum_{n=2}^N (c_s \mathsf{d_t}{\ep^n}, \ep^n)
 +b_h(\compeu^n,\mathsf{d_t}{\compeu^n})
 \ge &\;
\frac{1}{4\Delta t}\Big( c_s\|{\ep^N}\|^2
 +b_h(\compeu^N,{\compeu^N})
 -
 c_s\|{\ep^1}\|^2\\
 &\hspace{-2cm}-c_s\|2{\ep^1}-\ep^0\|^2)
 -b_h(\compeu^1,{\compeu^1})
 -b_h(2\compeu^1-\compeu^0,{2\compeu^1-\compeu^0})
\Big).
\end{align*}
Hence, 
\[
\frac{1}{\Delta t}(\vertiii{\{\compep^N, \compeu^N\}}_{h}^2
-\sum_{i=0}^1\vertiii{\{\compep^i, \compeu^i\}}_{h}^2)
\preceq \sum_{n=2}^N (c_s \mathsf{d_t}{\ep^n}, \ep^n)
 +b_h(\compeu^n,\mathsf{d_t}{\compeu^n})
\]
Combining this estimate with \eqref{haha}, we get 
\begin{align*}
 \vertiii{\{\compep^N, \compeu^N\}}_{h}^2
 +\Delta t\sum_{n=2}^Na_h(\compep^n,\compep^n)
 \preceq &
\Delta t \sum_{n=2}^N (c_s\mathcal{O}_{2,I}^2+(\lambda+\mu)\mathcal{O}_{2,I\!I}^2)^{1/2}
\vertiii{\{\compep^n, \compeu^n\}}_{h}\\
&+\sum_{i=0}^1\vertiii{\{\compep^i, \compeu^i\}}_{h}^2.
\end{align*}
Finally, the estimate \eqref{full-est} follows from a discrete Gronwall's inequality, c.f. 
\cite[Lemma 5.1]{HeywoodRannacher90}.
\end{proof}

\begin{remark}[Higher order BDF method]
\label{rk:2}
For $m$-step BDF methods with $m = 1$ or $3\le m\le 5$, 
we can 
still use a similar energy argument 
to 
derive the following estimate
\begin{align*}
 \vertiii{\{\compep^N,\compeu^N\}}_{h}^2
+\Delta t\sum_{n=2}^Na_h(\compep^n,\compep^n)
\preceq 
\exp(N\Delta t)&\;\Big(
\sum_{i=0}^{m-1}\vertiii{\{\compep^i,\compeu^i\}}_{h}^2\\
&\hspace{-0.2cm}+N\Delta t(c_s{\mathcal{O}}_{m,I}^2+(\lambda+\mu){\mathcal{O}}_{m,I\!I}^2)\Big).
\end{align*}
In the cases for $3\le m\le 5$, we need to apply 
the multiplier technique \cite{NevanlinnaOdeh81}, and take in the energy argument
the test function in the error equation 
\eqref{f-error-eq-1} to be $\compW_h:= \compep^n-\eta\, \compep^{n-1}$ with the multiplier 
$\eta = 0.0836$ for $m=3$,
$\eta = 0.2878$ for $m=4$, and
$\eta = 0.8160$ for $m=5$. More details of the multiplier technique can be found in the recent 
publications \cite{LubichMansour13, Akrivis15}.
\end{remark}

\begin{remark}[Starting values for BDF2 and BDF3]
The $m$-step BDF method needs ${m-1}$ starting values to begin with. 

For BDF2, we can simply take 
$(\compPh^1, \compUh^1)$ to be the Backward Euler solution to equations \eqref{bdf-time} with $m=1$
and $(\compPh^0, \compUh^0) = (\compPP(0), \compUP(0))$.
This implies 
\[
\vertiii{\{\ep^0,\eu^0\}}_{h}=0, \quad\quad
\vertiii{\{\ep^1,\eu^1\}}_{h}\preceq 
\Delta t (c_s{\mathcal{O}}_{1,I}^2+(\lambda+\mu){\mathcal{O}}_{1,I\!I}^2).
\]
Combining these estimates with \eqref{full-est}, we readily have 
$\vertiii{\{\ep^N,\eu^N\}}_{h}$ converges $k+1$-th order in space, 
and second-order in time.

For BDF3, we take 
$(\compPh^0, \compUh^0) = (\compPP(0), \compUP(0))$, 
$(\compPh^i, \compUh^i)$, $i=1,2$, to be the solution with Crank-Nicolson time stepping.
Similarly, the local error for 
$\vertiii{\{\ep^i,\eu^i\}}_{h}$, $i=1,2$, are 
third-order in time.
Then, the estimates in Remark \ref{rk:2} yields that
$\vertiii{\{\ep^N,\eu^N\}}_{h}$ converges $k+1$-th order in space, 
and third-order in time.
\end{remark}

\begin{remark}[diagonally implicit Runge-Kutta time stepping]
Alternatively, we can apply 
the (one-step, multi-stage) diagonally implicit Runge-Kutta (DIRK) methods to solve the DAE \eqref{scheme}.
We refer the interested reader to the references \cite{NguyenPeraireCockburn11b,JaustSchutz14} for a setup.
However, in our numerical experiments not documented here, 
we do observe the order reduction \cite{CarpenterGottliebAbarbanelDon95,RosalesSeiboldShirokoffZhou17} 
in high-order ($\ge3$) DIRK schemes due to inappropriate boundary treatment in the intermediate stages. 
We only observe second order accuracy for third- and fourth-order DIRK schemes. 
\end{remark}


\section{Numerical results}
\label{sec:numerics}
In this section, we present several numerical experiments to illustrate the performance of the proposed method.
The numerical results are performed using the NGSolve software \cite{Schoberl16}.

\subsection{Accuracy for a smooth solution with a large $\lambda$}
In order to confirm the optimal convergence rates in Section \ref{sec2:disc} and Section \ref{sec3:time}, 
we consider a manufactured smooth exact solution, similar to the one considered in \cite[Section 7.1]{Y117}. 
Specifically, we take the domain to be $\Omega=(0,1)^2$, with the exact 
displacement $\bld u=(u, v)$ and exact pressure $p$ given by 
\begin{align*}
 u(\bld x, t) =& -e^{-t}\cos(\pi x)\sin(\pi y)+\frac{1}{\mu+\lambda}e^{-t}
 \sin(\pi x)\sin(\pi y)\\
 v(\bld x, t) =& e^{-t}\sin(\pi x)\cos(\pi y)+\frac{1}{\mu+\lambda}e^{-t}
 \sin(\pi x)\sin(\pi y)\\
 p(\bld x, t) =&  e^{-t}\sin(\pi x)\sin(\pi y).
\end{align*}
Note that the solution is designed to satisfy \[\divs\,\bld u = \pi e^{-t}
 \sin(\pi (x+y))/(\mu+\lambda)\rightarrow 0 \quad \text{as }\quad \lambda\rightarrow +\infty.\]
We impose Dirichlet boundary conditions for both $\bld u$ and $p$, and choose the following material parameters:
\[
 c_0=0, \quad \alpha = 1,\quad \kappa = 1,\quad \lambda =10^5,\quad \mu = 1.
\]
The final computational time is $T=0.5$.

Our computation is based on uniform triangular meshes; see Figure \ref{fig:1} for the coarsest mesh with mesh size $h=1/4$. 
We consider the fully discrete scheme \eqref{bdf-time},  
with the (spatial) polynomial degree $k$ in the finite element spaces \eqref{space} varying from $1$ to $3$,
and the (temporal) BDF3 method ($m=3$). 
We also present numerical results using the relaxed $H(\mathrm{div})$-conformity approach, c.f. Remark \ref{rk:relax}. 
The stabilization parameter $\tau$ in the bilinear forms \eqref{bilinearforms}
is taking to be $\tau = 10 k^2$ for all the tests.
We take the time step size to be $\Delta t = h^{\max\{(k+1)/3,1\}}$, where $h$ is the 
spatial mesh size.  The error in the norm $\vertiii{\cdot}_h$, and the $L^2$-norms for displacement and pressure at the final time 
$T=0.5$ are 
recorded in Table \ref{table:1} on a sequences of uniformly refined meshes for the original scheme \ref{scheme}, and 
in Table \ref{table:2} for the relaxed $H(\mathrm{div})$-conforming scheme, c.f. Remark \ref{rk:relax}. 
In both tables, we observe the optimal 
convergence rates for the norm $\vertiii{\cdot}_h$, 
in full agreements with our main result in Theorem \ref{thm:energy-full} and  Remark \ref{rk:2}, 
we also observe optimal convergence rates  in the $L^2$-norm of the displacement ($k+2$), and the $L^2$-norm of the 
pressure ($k+1$).

\begin{figure}[ht!]
 \caption{The coarsest mesh with $h = 1/4$}
 \includegraphics[width=.4\textwidth]{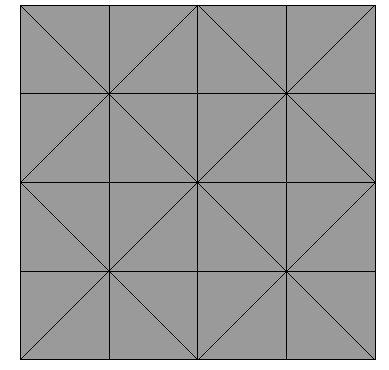}
\label{fig:1}
 \end{figure}

 \begin{table}[ht!]
\caption{Convergence study at the final time $T=0.5$: The original scheme.} 
\centering 
{%
\begin{tabular}{ c c cc cc c c}
\toprule
 &  mesh & \multicolumn{2}{c}{$\vertiii{\{\compP, \compU\}-
  \{\compPh, \compUh\}}_{h}$}
          & \multicolumn{2}{c}{$\|\bld u -\bld u_{h}\|_{\Omega}$}
          &\multicolumn{2}{c}{{$\|p-p_h\|_{\Omega}$}}\\
\midrule
$k$ & $h$ & error & order & error & order & error & order
\tabularnewline
\midrule
\multirow{5}{2mm}{1} 
&1/4& 7.214e-02  &  -   &  3.589e-03  &  -    &  2.190e-02  &  - \\
& 1/8&1.854e-02  &  1.96    &  4.476e-04  &  3.00    &  5.194e-03  &  2.08    \\
&1/16& 4.692e-03  &  1.98    &  5.558e-05  &  3.01    &  1.304e-03  &  1.99    \\ 
&1/32& 1.178e-03  &  1.99    &  6.911e-06  &  3.01    &  3.263e-04  &  2.00    \\ 
&1/64& 2.951e-04  &  2.00    &  8.609e-07  &  3.00    &  8.159e-05  &  2.00    \\ 
\midrule
\multirow{5}{2mm}{2} 
&1/4& 8.342e-03  &  -    &  3.306e-04  &  -    &  1.832e-03  &  -    \\ 
&1/8& 1.034e-03  &  3.01    &  2.024e-05  &  4.03    &  2.421e-04  &  2.92    \\ 
&1/16& 1.283e-04  &  3.01    &  1.239e-06  &  4.03    &  3.037e-05  &  3.00    \\ 
&1/32& 1.598e-05  &  3.01    &  7.658e-08  &  4.02    &  3.799e-06  &  3.00    \\ 
&1/64& 1.994e-06  &  3.00    &  4.759e-09  &  4.01    &  4.750e-07  &  3.00    \\ 
\midrule
\multirow{5}{2mm}{3} 
& 1/4&
 7.739e-04  &  -    &  2.583e-05  & -    &  1.851e-04  &  -    \\ 
& 1/8&
 4.785e-05  &  4.02    &  7.659e-07  &  5.08    &  1.130e-05  &  4.03    \\ 
&1/16&
 3.019e-06  &  3.99    &  2.357e-08  &  5.02    &  7.095e-07  &  3.99    \\ 
&1/32&
 1.879e-07  &  4.01    &  7.244e-10  &  5.02    &  4.409e-08  &  4.01    \\ 
&1/64&
 1.178e-08  &  4.00    &  4.849e-11  &  3.90    &  2.761e-09  &  4.00    \\ 
\bottomrule
\end{tabular}}
\label{table:1} 
\end{table}

 \begin{table}[ht!]
\caption{Convergence study at the final time $T=0.5$: The relaxed $H(\mathrm{div})$-conforming scheme. } 
\centering 
{%
\begin{tabular}{ c c cc cc c c}
\toprule
 &  mesh & \multicolumn{2}{c}{$\vertiii{\{\compP, \compU\}-
  \{\compPh, \compUh\}}_{h}$}
          & \multicolumn{2}{c}{$\|\bld u -\bld u_{h}\|_{\Omega}$}
          &\multicolumn{2}{c}{{$\|p-p_h\|_{\Omega}$}}\\
\midrule
$k$ & $h$ & error & order & error & order & error & order
\tabularnewline
\midrule
\multirow{5}{2mm}{1} 
&1/4&
 6.201e-02  &  -    &  3.441e-03  &  -    &  2.190e-02  &  -    \\ 
& 1/8&
 1.641e-02  &  1.92    &  4.691e-04  &  2.87    &  5.194e-03  &  2.08    \\ 
&1/16& 
 4.262e-03  &  1.94    &  6.326e-05  &  2.89    &  1.304e-03  &  1.99    \\ 
&1/32&
 1.085e-03  &  1.97    &  8.237e-06  &  2.94    &  3.263e-04  &  2.00    \\ 
&1/64&
 2.732e-04  &  1.99    &  1.048e-06  &  2.97    &  8.159e-05  &  2.00    \\ 
\midrule
\multirow{5}{2mm}{2} 
&1/4&
 8.170e-03  &  -    &  3.253e-04  &  -    &  1.832e-03  &  --    \\ 
&1/8&
 1.036e-03  &  2.98    &  2.040e-05  &  4.00    &  2.421e-04  &  2.92    \\ 
&1/16&
 1.295e-04  &  3.00    &  1.271e-06  &  4.01    &  3.037e-05  &  3.00    \\ 
&1/32& 
 1.618e-05  &  3.00    &  7.921e-08  &  4.00    &  3.799e-06  &  3.00    \\ 
&1/64& 
 2.021e-06  &  3.00    &  4.943e-09  &  4.00    &  4.750e-07  &  3.00    \\ 
\midrule
\multirow{5}{2mm}{3} 
& 1/4&
 7.633e-04  &  -    &  2.562e-05  &  -    &  1.851e-04  &  -    \\ 
& 1/8&
 4.741e-05  &  4.01    &  7.749e-07  &  5.05    &  1.130e-05  &  4.03    \\ 
&1/16&
 3.024e-06  &  3.97    &  2.442e-08  &  4.99    &  7.095e-07  &  3.99    \\ 
&1/32&
 1.894e-07  &  4.00    &  7.615e-10  &  5.00    &  4.409e-08  &  4.01    \\ 
&1/64&
 1.191e-08  &  3.99    &  3.930e-11  &  4.28    &  2.761e-09  &  4.00    \\ 
\bottomrule
\end{tabular}}
\label{table:2} 
\end{table}

\subsection{Barry and Mercer's problem}
We consider the Barry and Mercer's problem \cite{BarryMercer99}, for which an exact
solution is available in terms of infinite series 
(we refer the reader to the cited paper and also to \cite[Section 4.2.1]{Phillips05} for the expression). 
It models the behavior of a rectangular uniform porous material with a pulsating point source, drained on all sides, and on
which zero tangential displacements are assumed on the whole boundary. The point-source corresponds to a sine wave on the 
rectangular domain $(0,a)\times (0,b)$ and is given as \[f(t) = 2\beta \delta_{\bld x_0}\sin(\beta t),\] where 
$\beta = \frac{(\lambda+2\mu)\kappa}{ab}$ and $\delta_{\bld x_0}$ is the Dirac delta at the point $\bld x_0$.
The computational domain together with the boundary conditions are depicted in Figure \ref{fig:2}.

\begin{figure}[ht!]
 \caption{Computational domain and boundary conditions for the Barry and Mercer's problem}
 \includegraphics[width=0.8\textwidth]{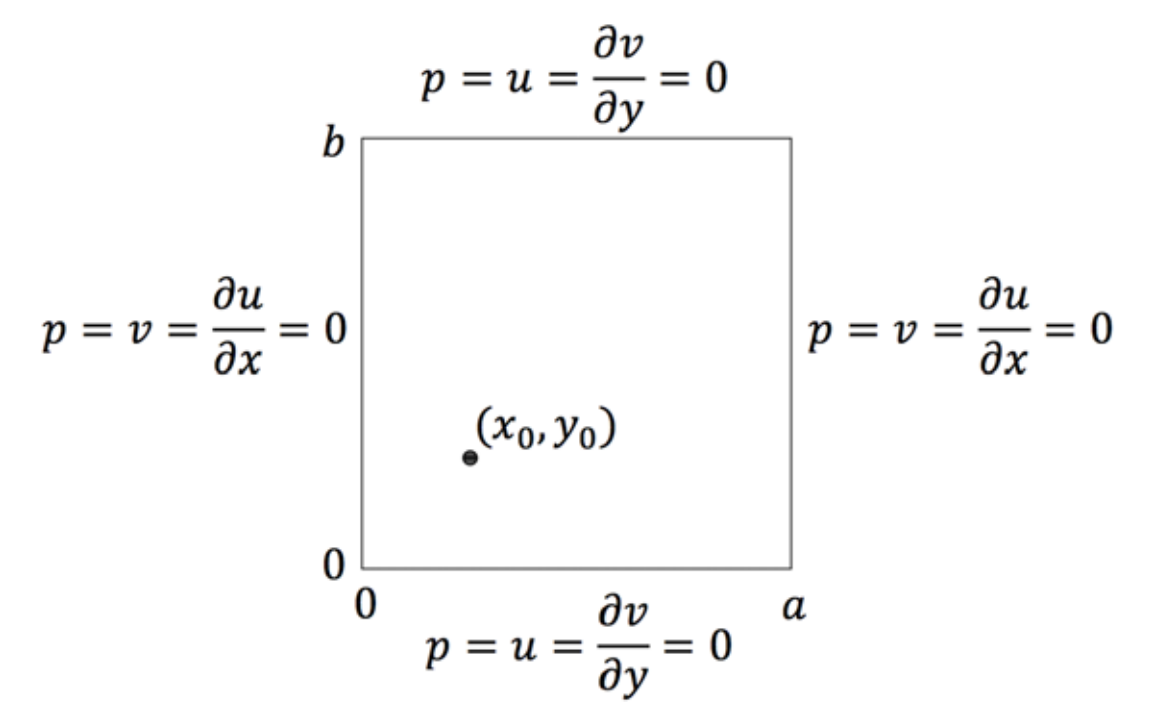}
\label{fig:2}
 \end{figure}

 As in \cite{Phillips07a,Rodrigo16}, 
 we consider the rectangular domain $(0,1)\times (0,1)$, and the following values of the material parameters:
 \[
c_0=0,\;\;\alpha = 1,\;\;  E = 10^5,\;\; \nu = 0.1,\;\; \kappa = 10^{-2},
 \]
where $E$ and $\nu$ denotes Young's modulus and the Poisson ratio, respectively, and 
\[
 \mu = \frac{E}{2(1+\nu)},\quad 
 \lambda = \frac{ E\nu}{(1-2\nu)(1+\nu)}.
\]
The source is positioned at the point $(1/4,1/4)$.

We consider the fully discrete scheme \eqref{bdf-time},  
with the (spatial) polynomial degree $k =1$ in the finite element spaces \eqref{space}
and the (temporal) BDF2 method ($m=2$). We use a relatively large time step of $\Delta t =\frac{\pi}{20 \beta}$.
The solution for the pressure on the deformed domain
on a uniform triangular mesh with mesh size $h = 1/64$ is plotted in Figure \ref{fig:3} 
for two different ``normalized time'' $\hat t = \beta t$ of values $\hat t = \pi/2$ and
$\hat t = 3\pi/2$. We observe that depending on the sign of the source term (positive for $\hat t = \pi/2$, negative for $\hat t=3\pi/2$)
the resultant displacements cause an expansion or a contraction of the medium. 
We also plot the pressure and x-component of the 
displacement profiles on three 
consecutive meshes, with mesh size $h = 1/32, 1/64,1/128$, 
along the diagonal line (0,0)--(1,1) of the domain, along with the exact solution in Figure \ref{fig:4}.
We observe form Figure \ref{fig:4} that the numerical solution resemble the exact solution very precisely.
\begin{figure}[ht!]
 \caption{Numerical solution for the pressure on the deformed domain at different time}
 \begin{multicols}{2}
  \includegraphics[width=0.48\textwidth]{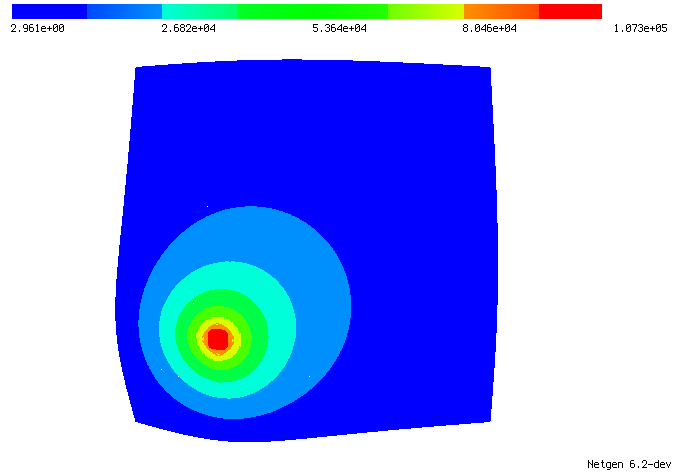}
  \begin{center}
   (a) $\hat t = \pi/2$
  \end{center}
 \columnbreak
 
  \includegraphics[width=0.48\textwidth]{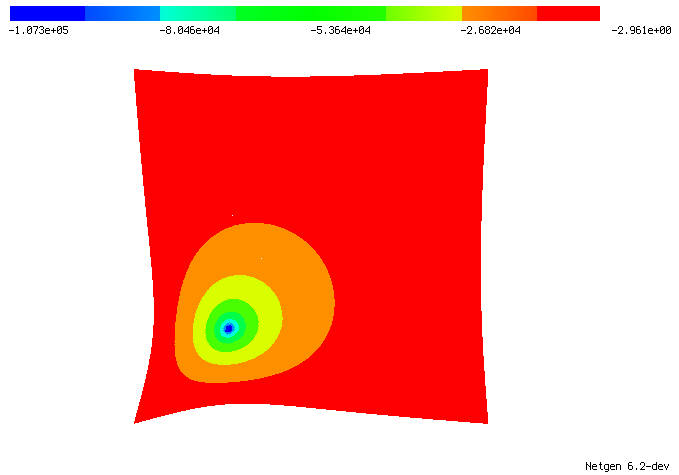} 
  \begin{center}
   (b) $\hat t = 3\pi/2$
  \end{center}
 \end{multicols}
\label{fig:3}
 \end{figure}

 \begin{figure}[ht!]
 \caption{Numerical solution for pressure and x-component of displacement along the diagonal (0,0)-(1,1) of the domain for different time}
 \begin{multicols}{2}
  \includegraphics[width=.5\textwidth]{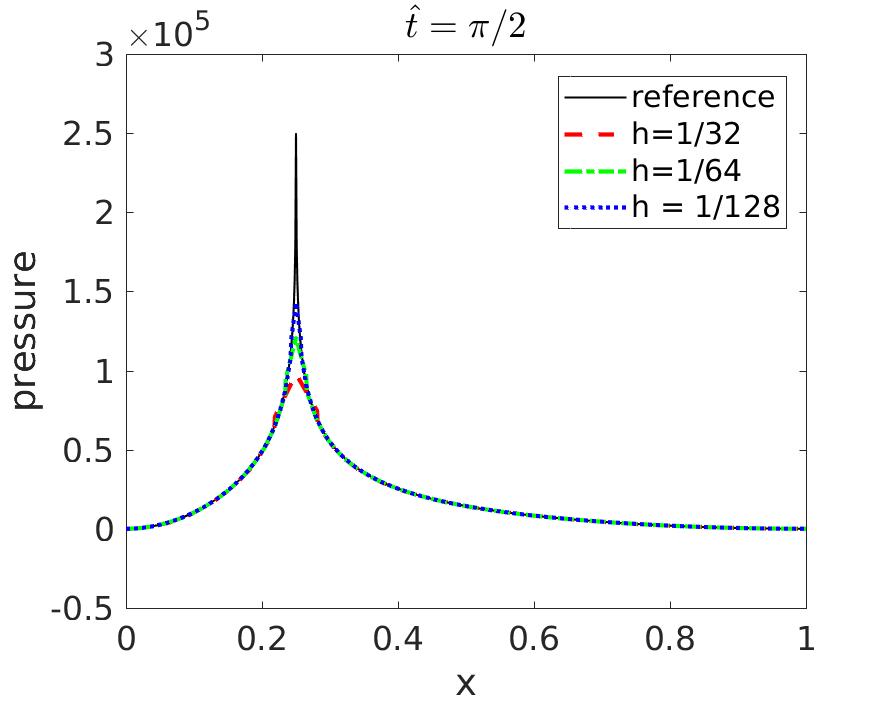}\\
  \includegraphics[width=.5\textwidth]{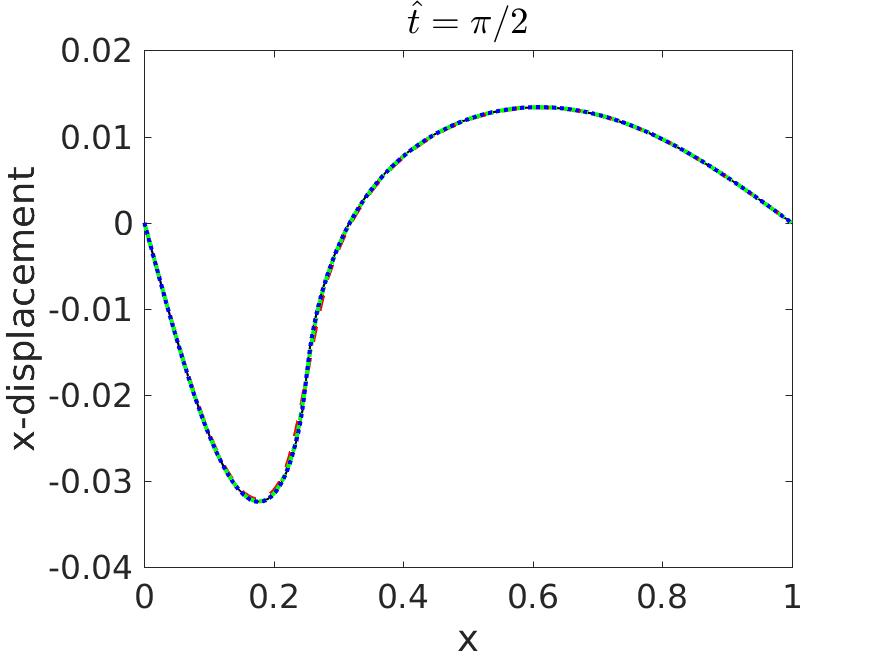}
 \columnbreak

   \includegraphics[width=.5\textwidth]{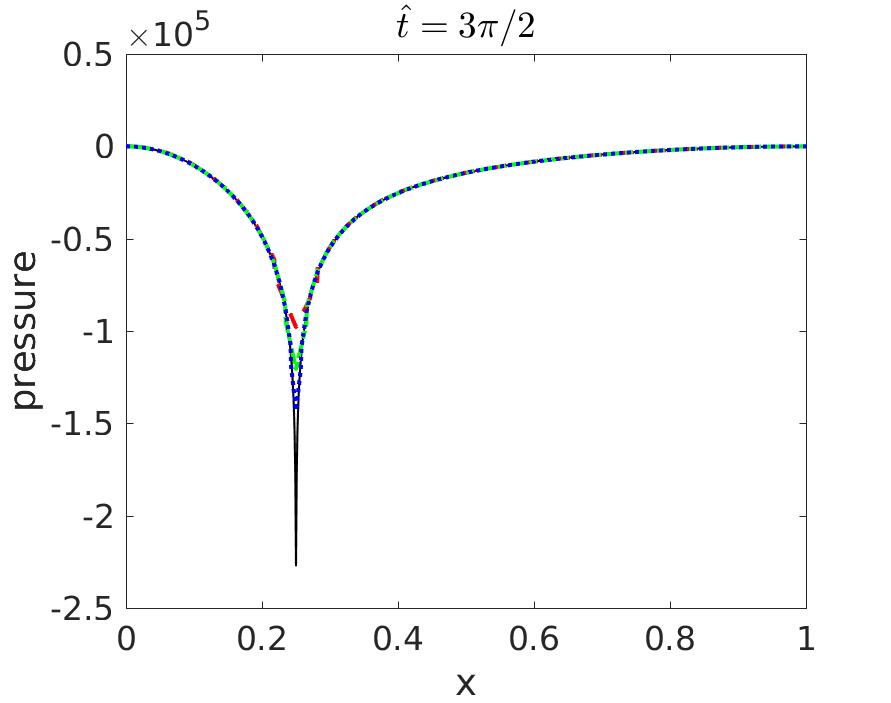}\\
   \includegraphics[width=.5\textwidth]{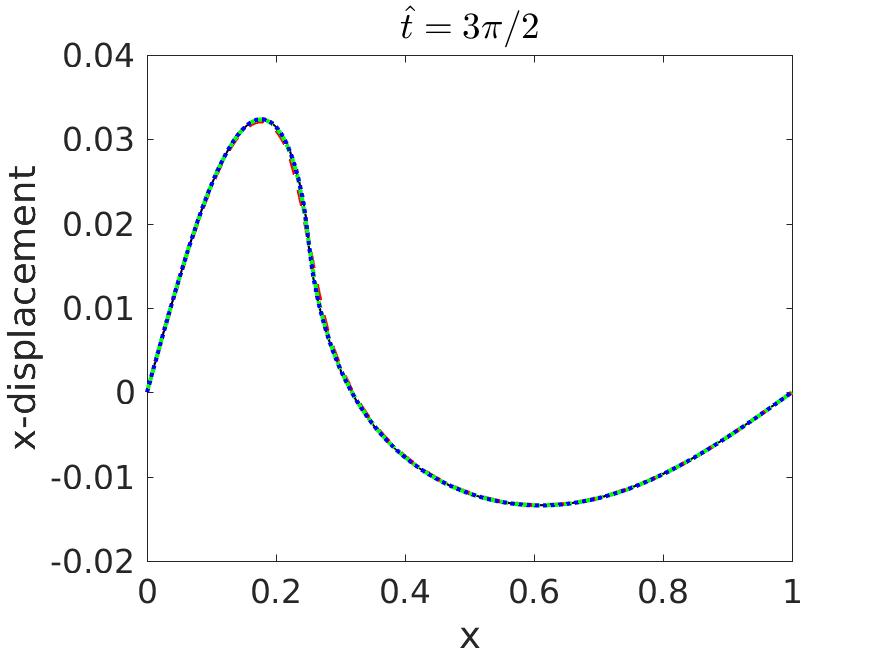}
 \end{multicols}
\label{fig:4}
 \end{figure}

 Finally, to check the robustness of the method with respect to pressure oscillations for small permeability combined with small time steps, 
 we show in Figure \ref{fig:5} the pressure profile after one step of backward Euler with $\kappa = 10^{-6}$ and $\Delta t = 10^{-4}$ on 
 the uniform triangular mesh with $h = 1/64$. We do not observe significant oscillation.

 \begin{figure}[ht!]
    \caption{Numerical solution for pressure after one time step. Left: numerical pressure on $\Omega$; Right: 
    numerical pressure on the diagonal line (0,0)--(1,1).}
     \includegraphics[width=.43\textwidth]{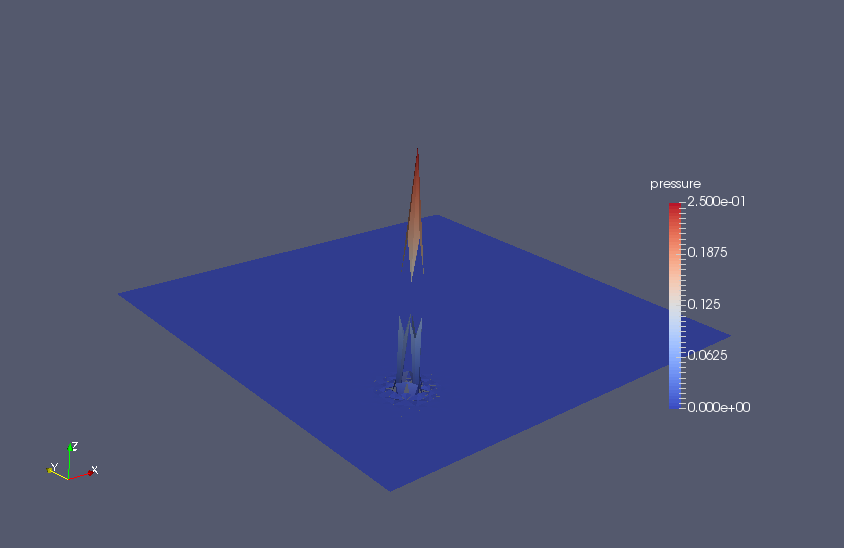}
   \includegraphics[width=.43\textwidth]{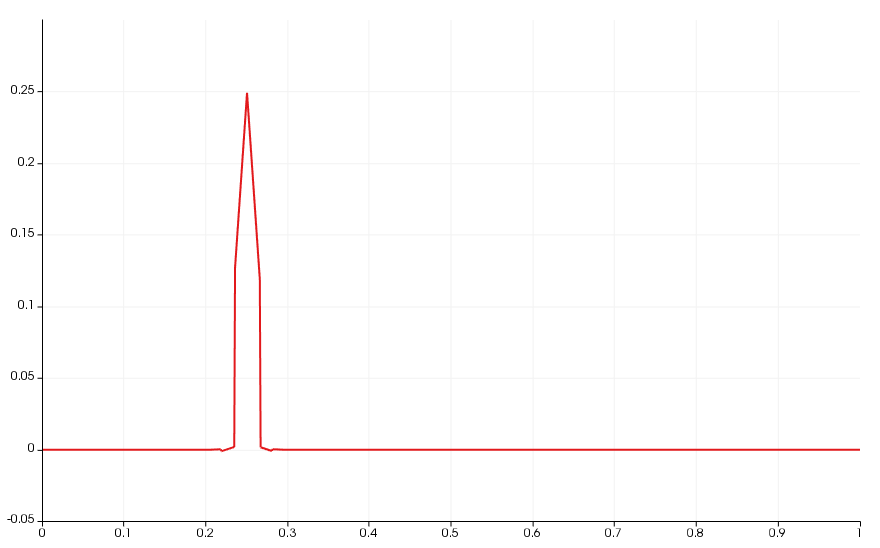}
   \label{fig:5}
 \end{figure}

 \section{Conclusion}
 \label{sec:conclusion}
 In this paper we have analyzed the convergence property of a novel high-order 
 HDG discretization of Biot's consolidation model in poroelasticity combined 
 with BDF time stepping.
 The method produce optimal convergence rates, and is free from Poisson locking when $\lambda \rightarrow \infty$.

\bibliographystyle{siam}


\end{document}